\numberwithin{equation}{section} 
\newtheorem{theorem}{\bf Theorem}[section]
\newtheorem{example}{\bf Example}[section]
\newtheorem{remark}{\bf Remark}[section]
\newtheorem{lemma}{\bf Lemma}[section]
\newcommand{\bphi}{\mbox{\boldmath $\phi$}}
 \newcommand{\norm}[1]{\left\lVert #1\right\rVert}
\newsavebox{\savepar}		 
\begin{document}
\title{Global Stabilization of  Two Dimensional Viscous Burgers' Equation by Nonlinear Neumann Boundary Feedback Control and its Finite Element Analysis}
\author{ Sudeep Kundu\footnote{
		Institute of Mathematics and Scientific Computing, University of Graz,
		Heinrichstr. 36, A-8010 Graz, Austria,
		Email:sudeep.kundu@uni-graz.at}\quad		
	and	
	Amiya Kumar Pani\footnote{
		Department of Mathematics, 	IIT Bombay, 
		Powai, Mumbai-400076, India,
		Email:{akp@math.iitb.ac.in}}.}
\maketitle
\abstract{ 
In this article, global stabilization results for the two dimensional (2D)	viscous Burgers' equation, that is, convergence of unsteady solution to its constant steady state solution with any initial data,  are established using a nonlinear Neumann boundary feedback control law.
Then, applying $C^0$-conforming finite element method in spatial direction, optimal error estimates in $L^\infty(L^2)$ and in $L^\infty(H^1)$- norms  for the state variable
and convergence result for the boundary feedback control law are derived. All the results preserve exponential stabilization property. 
Finally, several  numerical experiments  are conducted to confirm our theoretical findings.}

Keywords: 2D-viscous Burgers' equation, boundary feedback control, constant steady state, stabilization, finite element method, error estimate, numerical experiments\\
AMS subject classification: 35B37, 65M60, 65M15, 93B52, 93D15
\section{Introduction}
 We consider the following Neumann boundary control problem for the two-dimensional viscous Burgers' or Bateman-Burgers equation 
: seek $u=u(x,t),$ $t>0$ which satisfies
\begin{align}
&u_t-\nu\Delta u+u(\nabla u\cdot {\bf{1}})=0\qquad\text{in}\quad (x,t)\in \Omega\times(0,\infty)\label{feq1.1},\\
&\frac{\partial u}{\partial n}(x,t)=v_2(x,t)\qquad \text{on} \quad(x,t)\in \partial \Omega \times (0,\infty)\label{feq1.2},\\
&u(x,0)=u_0(x)\qquad \text{in}\quad x\in \Omega\label{feq1.3},
\end{align}
where  $\Omega\subset \mathbb{R}^2$ is a bounded domain with smooth boundary $\partial \Omega,$   $\nu>0$ is the diffusion constant with the diffusion term $\nu\Delta u$, $v_2$ is scalar control input, ${\bf{1}}=(1,1),$ $u(\nabla u\cdot {\bf{1}})=u\sum_{i=1}^{2}u_{x_i}$ is the nonlinear convection term and 
$u_0$ is a given function. Application of \eqref{feq1.1} in the area of fluid mechanics is significant to study turbulence  behavior,  where $u$ is denoted as flow speed of fluid media, $\nu>0$ is the viscosity parameter which is analogous to the inverse of Reynolds number in the Navier Stokes system. The rate of flow or flux through the boundary is $v_2$ which is our control. When $\nu$ tends to zero in \eqref{feq1.1}, it models nonlinear wave propagation.

In literature, several  local stabilization results  for one dimensional Burgers' equation
are available, say for example, see, \cite{bk, bk1}
for distributed and Dirichlet boundary control, and \cite {bgs}
for Neumann boundary control under  sufficiently smallness assumption on the  initial
 data.  We refer to \cite {ik, iy} and \cite{lmt} for further references on local stabilization results including results on existence and uniqueness. Related to instantaneous control of 1D Burgers' equation, we refer to \cite{hv02}.\\
Local stabilization result for 2D viscous Burgers' equation is available in \cite{raymond2010} where a nonlinear feedback control law is applied which is obtained through solving Hamilton-Jacobi-Bellman (HJB) equation and using Riccati based optimal feedback control. 
For instance, authors  first formulate the two dimensional Burgers' equation in abstract form  as 
$$w_t=Aw+F(w)+Bv, \quad w(0)=w_0,$$ 
where $A$, with domain $D(A)$, is the  infinitesimal generator of an analytic semigroup on a Hilbert space $W$, $F(w)$ is the nonlinear term, $B$ is the operator from a control space $V$ into $W$. With corresponding
 cost functional  of the form 
$$J(w_v,v)<\infty \quad \text{where}\quad J(w_v,v)=\frac{1}{2}\int_{0}^{\infty}\Big(\norm{w_v}^2_{W}+\norm{v}^2_V\Big)\;dt,$$
the associated linear feedback control law  becomes  $v=-B^*Pw$, where $P$ is the
 solution to the algebraic Riccati equation for the Linear Quadratic Regulator (LQR) problem. Through solving HJB equation 
  using Taylor series expansion, one can obtain the 
 nonlinear feedback control law as $$v=-B^*Pw+B^*(A-BB^*P)^{-*}PF(w),$$ 
 where $(A-BB^*P)^{-*}$ is the inverse of $(A-BB^*P)^{*}$
 (see \cite{raymond2010} for more details).
 Later on, Buchot {\it {et al.}} \cite{raymond2015} have discussed local stabilization result in the case of partial information for the two dimensional Burgers' type equation.  
 Subsequently in \cite{Raymond06},  author has shown local stabilization results for the Navier-Stokes system around a nonconstant steady state solution by constructing a linear feedback control law for the corresponding linearized equation. This, in turn, locally stabilizes the original nonlinear system. 
 All the above mentioned  stabilization results are local in nature and are valid under smallness assumption on the data. 

Our attempt in this paper is to establish global stabilization result without smallness assumption on the data through the nonlinear Neumann control law using Lyapunov type functional. Such  global stabilization results for one dimensional Burgers' equation was earlier studied in  \cite{krstic1} and \cite {balogh} for both Dirichlet and Neumann boundary control laws.  When the coefficient of viscosity is unknown, an adaptive control for one dimensional Burgers' equation is discussed in 
 \cite{liu}, \cite{Smaoui}, and \cite{smaoui1}.  Although, effect of these control laws to their state are shown computationally using finite difference method and Chebychev collocation method, but convergence of numerical solution posses some serious difficulty because of the typical nonlinearity present in the system through nonlinear feedback laws.  Also in \cite{bk, bk1}, authors have considered finite element method to solve numerically local stabilization problem for 1D Burgers' equation without any convergence analysis. Subsequently in \cite{skakp1},  
optimal error estimates in the context of finite element method for the state variable
and superconvergence result for the feedback control laws are derived. For related analysis on Benjamin Bona Mahony Burgers' (BBM-Burgers') type equations, we  refer to \cite{skakp2}. Concerning extensive literature for one dimensional Burgers' and BBM-Burgers' problem, see the references in \cite{skakp1, skakp2}.

To the best of our knowledge, there is hardly any result on global stabilization for the two dimensional Burgers' equation. Further to continue our investigation keeping an eye on the Navier-Stokes system,  finite element method is applied  to 2D Burgers' equation, that is, the equation \eqref{feq1.1}.

 The major contributions of this article are summarized as follows:
 \begin{itemize}
 	\item  With the help of Lyapunov functional,  a nonlinear Neumann feedback control law  for the problem \eqref{feq1.1}-\eqref{feq1.3}  is derived and  global stabilization results 
 	 in $L^\infty(H^i)$ $(i=0,1,2)$ norms are established.
 	\item Based on $C^0$- conforming finite element method in spatial direction, optimal error estimates, (optimality with respect to approximation property)  for the state variable and  for the feedback control law are derived keeping time variable continuous.
\item Several  numerical examples including an example in which a part of boundary  is with Neumann control  and other part is with Dirichlet boundary condition are  given to illustrate our theoretical findings.
\end{itemize} 
For the rest of the article, denote $H^m(\Omega)=W^{m,2}(\Omega)$ to be the standard Sobolev space with norm $\norm{\cdot}_m,$ and seminorm $|\cdot|_m$. For $m=0,$ it corresponds to the usual $L^2$ norm and is denoted by $\norm{\cdot}$.
The space $L^p((0,T);X)$  $1\leq p\leq\infty,$ consists of all strongly measurable functions $v:[0,T] \rightarrow X $ with norm
$$\norm{v}_{L^p((0,T);X)}:=\left(\int_{0}^{T}\norm{v(t)}^p_X dt\right)^\frac{1}{p}<\infty \quad \text{for} \quad 1\leq p<\infty,$$ and 
$$\norm{v}_{L^\infty((0,T);X)}:=\operatorname*{ess\,sup}\limits_{0\leq t\leq T}\norm{v(t)}_X<\infty.$$
The rest of the paper is organized as follows. While Section $2$ is on problem formulation and preliminaries, Section $3$  focuses  on global stabilization results using a nonlinear feedback control law. Section $4$ deals with finite element approximation  for the semidiscrete system. Further, optimal error estimates are obtained for the state variable and convergence result is derived for the feedback control law. Finally, Section $5$ concludes with some numerical experiments.

\section{Preliminaries and problem formulation.}
This section focuses  on  some preliminary results to be used in our subsequent sections. Further, it  deals with  the Neumann control law using Lyapunov functional  and  with problem formulation for our global stabilizability and finite element analysis on our latter sections.

The following trace embedding result holds for 2D.\\
{\bf{Boundary Trace Imbedding Theorem} (page $164,$ \cite{adams2003}): }
There exists a bounded linear map
$$T:	H^1(\Omega)\hookrightarrow L^q(\partial \Omega) \quad \text{for}\quad 2\leq q<\infty $$ such that
\begin{equation}\label{1.30}
\norm{Ty}_{L^q(\partial \Omega)}\leq C\norm{y}_{H^1(\Omega)},
	\end{equation}
for each $y\in H^1(\Omega)$, with the constant $C$ depend on $q$ and $\Omega$. Also the following trace result holds\\
{\bf{Trace inequality} (\cite{lm68}): }
\begin{equation}\label{1.31}
\norm{Ty}_{H^s(\partial\Omega)}\leq C\norm{y}_{H^{s+\frac{1}{2}}(\Omega)}, \quad s\neq 1,\quad 0<s\leq \frac{3}{2}.
\end{equation}	
Below, we recall the following inequalities for our subsequent use:\\
{\bf{Friedrichs's inequality}:} For $y\in H^1(\Omega),$ there holds
\begin{equation}\label{feq1}
\norm{y}^2\leq C_F\Big(\norm{\nabla y}^2+\norm{y}^2_{L^2(\partial\Omega)}\Big),
\end{equation}
where $C_F>0$ is the  Friedrichs's  constant.\\
More precisely, in 2D we have
\begin{align*}
\int_{\Omega} y^2dx=\int_{\Omega}y^2\Delta\phi dx,
\end{align*}
where $\phi(x)=\frac{1}{4}|x|^2$ so that $\Delta\phi=1$.
Now integrate by parts to obtain
\begin{align*}
\int_{\Omega} y^2dx&=-2\int_{\Omega} y\nabla y\nabla\phi dx+\frac{1}{2}\int_{\partial\Omega}y^2(x\cdot n)\; d\Gamma\\
&=-\int_{\Omega} y\nabla y x dx+\frac{1}{2}\int_{\partial\Omega}y^2(x\cdot n)\; d\Gamma\\
&\leq \frac{1}{2}\int_{\Omega} y^2 dx+\frac{1}{2}\sup_{x\in\partial\Omega}|x|^2\int_{\Omega} |\nabla y|^2 dx+\frac{1}{2}\sup_{x\in\partial\Omega}|x|\int_{\partial\Omega}y^2\; d\Gamma.
\end{align*}
Therefore, it follows that 
\begin{align*}
\int_{\Omega} y^2dx\leq \sup_{x\in\partial\Omega}|x|^2\int_{\Omega} |\nabla y|^2 dx+\sup_{x\in\partial\Omega}|x|\int_{\partial\Omega}y^2\; d\Gamma.
\end{align*}
Hence, the Friedrichs's inequality constant can be taken as $C_F=\max\{\sup_{x\in\partial\Omega}|x|^2,\sup_{x\in\partial\Omega}|x|\}$.
{\bf{Gagliardo-Nirenberg inequality}} (see \cite{nirenberg59}):
For $w\in H^1(\Omega)$, we have
\begin{align*}
\norm{w}_{L^4}&\leq C\Big(\norm{w}^{1/2}\norm{\nabla w}^{1/2}+\norm{w}\Big), \quad \text{and for $w\in H^2(\Omega)$},\quad \text{we have}\\
\norm{\nabla w}_{L^4}&\leq C\Big(\norm{ w}^{1/4}\norm{\Delta w}^{3/4}+\norm{w}\Big).
\end{align*}
{\bf{Agmon's inequality}} (see \cite{agmons10}):
For $z\in H^2(\Omega),$ there holds
$$\norm{z}_{L^\infty}\leq C\Big(\norm{z}^\frac{1}{2}\norm{\Delta z}^\frac{1}{2}+\norm{z}\Big).$$	
Now the corresponding equilibrium or steady state problem of  \eqref{feq1.1}-\eqref{feq1.3} becomes: find $u^\infty$ as a solution of
\begin{align}
-\nu \Delta u^\infty+u^\infty(\nabla u^\infty\cdot {\bf{1}})&=0 \qquad\text{in} \quad \Omega \label{feq1.5},\\
\frac{\partial u^\infty}{\partial n}&=0 \quad \text{on} \quad \partial \Omega \label{feq1.6}.
\end{align}
Note that any constant $w_d$ satisfies \eqref{feq1.5}-\eqref{feq1.6}. Without loss of generality, we assume that $w_d\geq 0$. When $\nu$ is sufficiently small and initial condition $u_0$ is antisymmetric, the numerical solution of  \eqref{feq1.1}-\eqref{feq1.3} with $\frac{\partial u}{\partial n}=0$ may converge to a nonconstant steady state solution for which related references are given in \cite{skakp1}. We do not consider such  cases here. To achieve $$\lim_{t\to\infty} u(x,t)=w_d\quad \forall~ x\in\Omega,$$ it is enough to consider $\lim_{t\to\infty}w=0,$ where $w=u-w_d$ and $w$ satisfies
\begin{align}
&w_t-\nu \Delta w+w_d(\nabla w\cdot {\bf{1}})+w(\nabla w\cdot {\bf{1}})=0 \qquad\text{in}\quad (x,t)\in \Omega\times(0,\infty)\label{feq1.7},\\
&\frac{\partial w}{\partial n}(x,t)=v_2(x,t),\quad \text{on} \quad \partial \Omega\times(0,\infty)\label{eqn1.8},\\
&w(0)=u_0-w_d=w_0(\text{say})\quad\text{in}\quad\Omega \label{feq1.9}.
\end{align}
The motivation behind choosing the Neumann boundary control comes from the physical situation. Say for example,  in thermal problem, one cannot actuate the temperature $w$ on the boundary, but the heat flux $\frac{\partial w}{\partial n}$. This makes the stabilization problem nontrivial because $w_d$ is not asymptotically stable with zero Neumann boundary data. Concerning Dirichlet boundary control unlike in 1D \cite{krstic1}, it is not easy to get a concrete useful form of the control law. Although the control law $v_2$ derived in \eqref{feqx1}, is in invertible form so we can obtain $w$ in terms of $\frac{\partial w}{\partial n}$ on  the boundary  by solving cubic equation using Cardan's method, but that form is not useful for the stabilizability analysis.

For our analysis, the following compatibility conditions for $w_0$  on the boundary are required, namely; 
\begin{equation}\label{comp}
\frac{\partial w_0}{\partial n}=v_2(x,0) \quad \text{and} \quad \frac{\partial w_t}{\partial n}(x,0)=v_{2t}(x,0),
\end{equation}
 where $v_2(x,\cdot)$ is continuously differentiable at $t=0$ for almost all $x$. These conditions are required for the proof of  Lemmas \ref{flm4} and \ref{flm5}.
 
Now, the motivation for choosing  the control law comes from the construction of a Lyapunov functional of the following form $V(t)=\frac{1}{2}\int_{\Omega}w(x,t)^2\; dx$.
Hence,  on taking derivative with respect to time, we arrive at
\begin{align*}
\frac{dV}{dt}&=\int_{\Omega}w\Big(\nu \Delta w-w_d(\nabla w\cdot {\bf{1}})-w(\nabla w\cdot {\bf{1}})\Big)\;dx\\
&=-\nu\norm{\nabla w}^2+\nu\int_{\partial\Omega}\frac{\partial w}{\partial n}w\;d\Gamma-\int_{\Omega}w_d(\nabla w\cdot {\bf{1}})w\; dx-\int_{\Omega}w(\nabla w\cdot {\bf{1}})w\; dx.
\end{align*}
Using the Young's inequality,  it follows that
\begin{align} \label{estimate:linearised-term}
w_d\Big(\big(\nabla w\cdot {\bf{1}}\big),w\Big)=\frac{w_d}{2}\int_{\Omega}\Big((w^2)_{x_1}+(w^2)_{x_2}\Big)\;dx &=\frac{w_d}{2}\sum_{j=1}^{2}\int_{\partial\Omega}w^2\cdot\nu_j\; d\Gamma \notag\\
& \leq\frac{w_d}{\sqrt 2} \int_{\partial\Omega} w^2 \;d\Gamma
\leq  w_d\int_{\partial\Omega} w^2\; d\Gamma,
\end{align}
and
\begin{align} \label{estimate:nonlinear-term}
\int_{\Omega}w(\nabla w\cdot {\bf{1}})w\; dx\leq \frac{1}{3}\sum_{j=1}^{2}\int_{\partial\Omega}w^3\cdot\nu_j\; d\Gamma&\leq \frac{1}{3}\sqrt 2\int_{\partial\Omega}|w|^3\; d\Gamma\notag\\
&\leq {c_0}\int_{\partial\Omega}w^2\; d\Gamma+\frac{1}{18 c_0}\int_{\partial\Omega}w^4\; d\Gamma,
\end{align}
where $c_0$ is a positive constant.
Therefore, it follows that
\begin{align}
\frac{dV}{dt}\leq -\nu\norm{\nabla w}^2+\int_{\partial\Omega}\Big(\nu\frac{\partial w}{\partial n}+(w_d+{c_0}) w+\frac{1}{18  c_0}w^3\Big)w\;d\Gamma.
\end{align}
Now, choose the Neumann boundary feedback control law as
\begin{align}\label{feqx1}
v_2(x,t)=-\frac{1}{\nu}\Big(2(c_0+w_d)w+\frac{2}{9c_0}w^3\Big) \quad \text{on} \quad\partial\Omega,
\end{align}
to obtain
\begin{align*}
\frac{dV}{dt}&\leq -\nu\norm{\nabla w}^2-\big({c_0}+w_d\big)\int_{\partial \Omega}w^2\;d\Gamma
-\frac{1}{6 c_0}\int_{\partial \Omega}w^4\;d\Gamma\\
&\leq -\min\Big\{\nu,\big({c_0} +w_d\big)\Big\}\Big(\norm{\nabla w}^2+\norm{w}^2_{L^2(\partial\Omega)}\Big)\\
&\leq -\frac{2}{C_F}\min\Big\{\nu,\big({c_0}+w_d\big)\Big\}(\frac{1}{2}\norm{w}^2)
\leq -C_{Lyp}V,
\end{align*}
where $C_{Lyp}=\frac{2}{C_F}\min\Big\{\nu,\big({c_0}+w_d\big)\Big\}>0$.\\
Setting $B\big(v;w,\phi\big)$ as $B\big(v;w,\phi\big)=\Big(v\big(\nabla w\cdot {\bf{1}}\big),\phi\Big),$ 
$w$ satisfies a weak form of \eqref{feq1.7}-\eqref{feq1.9} as
\begin{align}
(w_t,v)+&\nu(\nabla w,\nabla v)+w_d\big(\nabla w\cdot {\bf{1}}, v\big)+B\big(w;w,v\big)\notag\\
&+\Big\langle(2 c_0+2w_d)w+\frac{2}{9c_0}w^3,v\Big\rangle_{\partial\Omega}\; =0\quad\forall~v\in H^1(\Omega)\label{feq1.10},
\end{align}
with $w(0)=w_0,$ where $\langle v,w\rangle_{\partial\Omega}:=\int_{\partial \Omega}vw \;d\Gamma$.\\
For our subsequent analysis, we assume that there exists a unique weak solution $w$ of \eqref{feq1.10} satisfying the following regularity results
\begin{align}\label{ex}
\norm{w(t)}^2_2+\norm{w_t(t)}^2_1+\int_{0}^{t}\norm{ w_t(s)}^2_2 ds\leq C.
\end{align}
For existence and uniqueness  with continuous dependence property of  one dimensional Burgers' equation with similar type nonlinearity, see, \cite{liu},\cite{iy} and their arguments can be modified to prove the wellposedness of the problem \eqref{feq1.10}. For regularity results, the energy method applied in our section 3 can be appropriately modified to prove 
\eqref{ex}. Therefore, we shall not pursue it further in this article.

Throughout the paper $C$ is a generic positive constant. 

\section{Stabilization results}
In this section, we establish global stabilization results of the continuous problem \eqref{feq1.1}-\eqref{feq1.3}. More precisely, exponential stabilization results  for the state variable $w(t)$ are shown for the modified problem \eqref{feq1.7}-\eqref{feq1.9}, where feedback control $v_2$ is given in \eqref{feqx1}. Moreover, additional regularity results are established assuming compatibility conditions, which are crucial for proving optimal error estimates for the state variable. 

Our results of this section are based on energy arguments using exponential weight functions. For similar analysis, see, \cite{temam},  \cite{HR82}, \cite{Sobolevskii}, \cite{He}, and \cite{goswami}.

Throughout this section, all the results hold with the same decay rate $\alpha$:
\begin{equation}\label{decay}
0\leq\alpha\leq\frac{1}{C_F}\min\Big\{{\nu},(c_0+w_d)\Big\}.
\end{equation}
\begin{lemma}\label{flm1}
	Let $w_0\in L^2(\Omega)$. Then, there holds
	\begin{align*}
	\norm{w(t)}^2+\beta e^{-2\alpha t}\int_{0}^{t}e^{2\alpha s}\Big(\norm{\nabla w(s)}^2+\norm{w(s)}^2_{L^2(\partial\Omega)}+\frac{1}{ 3\beta c_0}\norm{w(s)}^4_{L^4(\partial\Omega)} \Big)ds\leq e^{-2\alpha t}\norm{w_0}^2,
	\end{align*}
	where $\beta= 2\min\{(\nu-\alpha C_F), (c_0+w_d-\alpha C_F)\}>0$, and $C_F>0$ is the constant in the Friedrichs's inequality \eqref{feq1}.
\end{lemma}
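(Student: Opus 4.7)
The plan is to test the weak form \eqref{feq1.10} with $v=w$, which reproduces exactly the formal Lyapunov computation that motivated the choice of the feedback law \eqref{feqx1}. Concretely, I would start from
$$\tfrac{1}{2}\tfrac{d}{dt}\norm{w}^2+\nu\norm{\nabla w}^2=-w_d\big(\nabla w\cdot{\bf{1}},w\big)-B(w;w,w)-\Big\langle(2c_0+2w_d)w+\tfrac{2}{9c_0}w^3,\,w\Big\rangle_{\partial\Omega},$$
convert the two volume terms $w_d(\nabla w\cdot{\bf{1}},w)$ and $B(w;w,w)$ into boundary integrals via the estimates \eqref{estimate:linearised-term} and \eqref{estimate:nonlinear-term}, and then combine the resulting boundary terms with the feedback contribution. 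The specific coefficients in \eqref{feqx1} are tuned so that $-(w_d+c_0)w$ remains from the quadratic boundary terms and $-\frac{1}{6c_0}w^3$ remains from the cubic ones, producing the pointwise differential inequality
$$\tfrac{1}{2}\tfrac{d}{dt}\norm{w}^2\leq -\nu\norm{\nabla w}^2-(c_0+w_d)\norm{w}^2_{L^2(\partial\Omega)}-\tfrac{1}{6c_0}\norm{w}^4_{L^4(\partial\Omega)}.$$

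To insert the exponential weight I would multiply by $e^{2\alpha t}$ and rewrite the left-hand side as $\tfrac{1}{2}\tfrac{d}{dt}\!\big(e^{2\alpha t}\norm{w}^2\big)-\alpha e^{2\alpha t}\norm{w}^2$. The extra $\alpha e^{2\alpha t}\norm{w}^2$ has to be absorbed into the dissipation, and this is where the Friedrichs inequality \eqref{feq1} is invoked:
$$\alpha\norm{w}^2\leq \alpha C_F\Big(\norm{\nabla w}^2+\norm{w}^2_{L^2(\partial\Omega)}\Big).$$
Subtracting this from the dissipative terms yields
$$\tfrac{d}{dt}\!\big(e^{2\alpha t}\norm{w}^2\big)+e^{2\alpha t}\Big(\beta\big(\norm{\nabla w}^2+\norm{w}^2_{L^2(\partial\Omega)}\big)+\tfrac{1}{3c_0}\norm{w}^4_{L^4(\partial\Omega)}\Big)\leq 0,$$
with $\beta=2\min\{\nu-\alpha C_F,\,c_0+w_d-\alpha C_F\}$; the restriction \eqref{decay} on $\alpha$ is precisely the one that keeps $\beta>0$, so no cases are lost.

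The remaining step is routine: integrate from $0$ to $t$, multiply through by $e^{-2\alpha t}$, and factor $\beta$ out of the bracket so that the $L^4$-boundary term acquires the coefficient $\frac{1}{3\beta c_0}$ stated in the lemma. No compactness, duality, or Gronwall argument is needed since the bound arises from a single energy identity. The only potential pitfall is bookkeeping: one must keep track of the exact algebraic cancellations on $\partial\Omega$ between the quadratic and cubic boundary contributions produced by \eqref{estimate:linearised-term}, \eqref{estimate:nonlinear-term}, and the feedback law \eqref{feqx1}, as any sign or constant error there will spoil the clean dissipative structure. The regularity assumed in \eqref{ex} is more than sufficient to justify the pointwise-in-time differentiation of $\tfrac{1}{2}\norm{w}^2$ and the integration by parts, so there is no analytic obstacle beyond that arithmetic.
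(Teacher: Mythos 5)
Your proposal is correct and follows essentially the same route as the paper: testing the weak form with $w$ and then multiplying by $e^{2\alpha t}$ is just a reordering of the paper's choice $v=e^{2\alpha t}w$, and you invoke the same boundary estimates \eqref{estimate:linearised-term}--\eqref{estimate:nonlinear-term}, the same Friedrichs absorption of the $\alpha$-term, and the same bookkeeping that leaves $2(\nu-\alpha C_F)$, $2(c_0+w_d-\alpha C_F)$ and $\frac{1}{3c_0}$ as the dissipative coefficients before factoring out $\beta$. No gap to report.
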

\begin{proof}
	Set $v=e^{2\alpha t}w$ in \eqref{feq1.10} to obtain
	\begin{align}
	\frac{d}{dt}\norm{e^{\alpha t}w}^2-&2\alpha \norm{e^{\alpha t}w}^2+2\nu \norm{e^{\alpha t}\nabla w}^2+2e^{2\alpha t}\int_{\partial \Omega}\Big(( 2 c_0+2w_d)w^2+\frac{2}{9c_0}w^4\Big)\; d\Gamma\notag\\
	&=-2w_de^{2\alpha t}\Big(\big(\nabla w\cdot {\bf{1}}\big),w\Big)-2e^{2\alpha t}B\big(w;w,w\big)
	\label{feq1.11}.
	\end{align}
	For the first term on the right hand side of \eqref{feq1.11}, we use \eqref{estimate:linearised-term} to  bound it as
	\begin{align}
	2w_de^{2\alpha t}\Big(\big(\nabla w\cdot {\bf{1}}\big),w\Big) 
	\leq \sqrt 2 w_de^{2\alpha t}\int_{\partial\Omega} w^2 d\Gamma 
	\leq 2 w_de^{2\alpha t}\int_{\partial\Omega} w^2 d\Gamma\label{feq1.12}.
	\end{align}
	For the second term on the right hand side of \eqref{feq1.11},  a use of   \eqref{estimate:nonlinear-term}  with the Young's inequality yields
	\begin{align}
	2e^{2\alpha t}B\big(w;w,w\big) 
	\leq \frac{2}{3}e^{2\alpha t}\sqrt 2\int_{\partial\Omega}|w|^3\; d\Gamma 
	\leq 2 c_0 e^{2\alpha t}\int_{\partial\Omega}w^2\; d\Gamma+\frac{1}{9c_0}e^{2\alpha t}\int_{\partial\Omega}w^4 \;d\Gamma\label{feq1.13}.
	\end{align}
	Now, using the Friedrichs's inequality \eqref{feq1}, it follows that
	\begin{equation}\label{feq1.14}
	-2\alpha e^{2\alpha t}\norm{w}^2 \geq -2\alpha e^{2\alpha t}C_F\Big(\norm{\nabla w}^2+\norm{w}^2_{L^2(\partial\Omega)}\Big).
	\end{equation}
	Hence, from \eqref{feq1.11}, we arrive using \eqref{feq1.12}- \eqref{feq1.14} at
	\begin{align}\label{feq1.15}
	\frac{d}{dt}\norm{e^{\alpha t}w}^2+2(\nu-\alpha C_F)\norm{e^{\alpha t}\nabla w}^2+&2 e^{2\alpha t}\Big(\big(c_0+w_d- \alpha C_F\big)\int_{\partial\Omega}w^2\; d\Gamma\notag\\
	&+\frac{1}{6 c_0}\int_{\partial\Omega}w^4\; d\Gamma\Big) 
	\leq 0.
	\end{align}
Since decay rate satisfy \eqref{decay}, the coefficients on the left hand side of \eqref{feq1.15} are non-negative.
	Integrate \eqref{feq1.15} with respect to time from $0$ to $t,$ and then, multiply the resulting inequality by $e^{-2\alpha t}$ to obtain
	\begin{align*}
	\norm{w(t)}^2+2(\nu-\alpha C_F)e^{-2\alpha t}\int_{0}^{t}e^{2\alpha s}\norm{\nabla w(s)}^2 ds&+2 e^{-2\alpha t}\int_{o}^{t}e^{2\alpha s}\Big(\big(c_0+w_d- \alpha C_F\big)\norm{w(s)}^2_{L^2(\partial\Omega)}\\
	&\qquad+\frac{1}{6 c_0}\norm{w(s)}^4_{L^4(\partial\Omega)}\Big) \;ds\leq e^{-2\alpha t}\norm{w_0}^2.
	\end{align*}
	This completes the proof.
\end{proof}
\begin{remark}\label{frm2.1}
	The above Lemma also holds for $\alpha=0,$ that is,
	\begin{equation}\label{feqn1.2}
	\norm{w(t)}^2+2\nu\int_{0}^{t}\norm{\nabla w(s)}^2 ds+2 \int_{0}^{t}\Bigg(\int_{\partial\Omega}\Big(\big(c_0+ w_d\big)w(s)^2+\frac{1}{6c_0}w(s)^4\Big) \;d\Gamma\Bigg) ds\leq \norm{w_0}^2.
	\end{equation}
	Moreover,
	by the Friedrichs's inequality, it follows that
	\begin{equation*}
	e^{-2\alpha t}\int_{0}^{t}e^{2\alpha s}\norm{w(s)}^2 ds\leq Ce^{-2\alpha t}\norm{w_0}^2.
	\end{equation*}
\end{remark}
\begin{remark}\label{rm}
Now instead of taking the control on the whole boundary, if we take the above mentioned Neumann control on some part of the boundary ($\Gamma_N$) where $\Gamma_N$ has nonzero measure with remaining part zero Dirichlet boundary condition, still the stabilization result holds.
For instance, consider $\partial\Omega=\Gamma_D\cup \Gamma_N$ with $\Gamma_D\cap \Gamma_N=\phi$, where $\Gamma_D$ and $\Gamma_N$ are sufficiently smooth. 
With this setting, from \eqref{feq1.11}, we arrive at
\begin{align}
\frac{d}{dt}\norm{e^{\alpha t}w}^2-&2\alpha \norm{e^{\alpha t}w}^2+2\nu \norm{e^{\alpha t}\nabla w}^2+2e^{2\alpha t}\int_{\Gamma_N}\Big((2 c_0+2w_d)w^2+\frac{2}{9c_0}w^4\Big)\; d\Gamma\notag\\
&=-2w_de^{2\alpha t}\Big(\big(\nabla w\cdot {\bf{1}}\big),w\Big)-2e^{2\alpha t}B\big(w;w,w\big)\notag\\
&\leq 2 w_de^{2\alpha t}\int_{\Gamma_N} w^2 d\Gamma+2 c_0 e^{2\alpha t}\int_{\Gamma_N}w^2 d\Gamma+\frac{1}{9c_0}e^{2\alpha t}\int_{\Gamma_N}w^4 d\Gamma
\label{xfeq1.11}.
\end{align} 
Using Friedrichs's inequality $\norm{v}^2\leq C_F\Big(\norm{\nabla v}^2+\norm{v}^2_{L^2(\Gamma_N)}\Big)$, we obtain
\begin{align*}
\frac{d}{dt}\norm{e^{\alpha t}w}^2+2(\nu-\alpha C_F)\norm{e^{\alpha t}\nabla w}^2+&2 e^{2\alpha t}\Big(\big(c_0+w_d-2\alpha C_F\big)\int_{\Gamma_N}w^2 d\Gamma \notag\\
&\qquad +\frac{1}{6 c_0}\int_{\Gamma_N}w^4 d\Gamma\Big) \leq 0.
\end{align*}
Proceed as before to complete the rest of the proof for $L^2$- stabilization result.
In higher order norm, stabilization result also holds similarly when control works on some part of the boundary.
\end{remark}
\begin{lemma}\label{flm2}
	Let $w_0\in H^1(\Omega).$ Then, for $C=C(\norm{w_0}_1)$ there holds 
	\begin{align*}
	\Big(\norm{\nabla w(t)}^2&+\frac{2(c_0+w_d)}{\nu}\norm{w(t)}^2_{L^2(\partial\Omega)}+\frac{1}{9\nu c_0}\norm{w(t)}^4_{L^4(\partial\Omega)}\Big)+\nu e^{-2\alpha t}\int_{0}^{t}\norm{e^{\alpha s}\Delta w(s)}^2\; ds\\&\leq Ce^Ce^{-2\alpha t}.
	\end{align*}	
\end{lemma}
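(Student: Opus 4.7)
The plan is to test \eqref{feq1.7} with $-\Delta w$, use the Neumann feedback \eqref{feqx1} to rewrite the resulting boundary integral as a perfect time derivative, estimate the convective terms via Agmon and Young, and close the resulting differential inequality through a Gronwall argument whose $L^1$-in-time density is furnished by Lemma~\ref{flm1} and Remark~\ref{frm2.1}.

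First, I would take the $L^2(\Omega)$-inner product of \eqref{feq1.7} with $-\Delta w$ (legitimate by the assumed regularity \eqref{ex}). The parabolic term, integrated by parts, produces
$-(w_t,\Delta w) = \tfrac{1}{2}\tfrac{d}{dt}\norm{\nabla w}^2 - \int_{\partial\Omega} w_t\, v_2\,d\Gamma,$
and inserting the explicit formula for $v_2$ from \eqref{feqx1} turns the boundary piece into
$$-\int_{\partial\Omega}w_t\, v_2\,d\Gamma = \frac{d}{dt}\!\left[\frac{c_0+w_d}{\nu}\norm{w}_{L^2(\partial\Omega)}^2 + \frac{1}{18\nu c_0}\norm{w}_{L^4(\partial\Omega)}^4\right].$$
Denoting the quantity in parentheses on the left of the statement by $E(t)$, this yields $\tfrac{1}{2}E'(t) + \nu\norm{\Delta w}^2 = w_d\big((\nabla w\cdot{\bf{1}}),\Delta w\big) + \big(w(\nabla w\cdot{\bf{1}}),\Delta w\big).$ The linear term is controlled by $\tfrac{\nu}{8}\norm{\Delta w}^2 + C\norm{\nabla w}^2$ via Cauchy--Schwarz and Young. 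For the cubic term I would use $|(w(\nabla w\cdot{\bf{1}}),\Delta w)| \leq \sqrt{2}\,\norm{w}_{L^\infty}\norm{\nabla w}\norm{\Delta w}$, bound $\norm{w}_{L^\infty}$ by Agmon's inequality, and apply Young's inequality twice to obtain $\leq \tfrac{3\nu}{8}\norm{\Delta w}^2 + C\norm{w}^2\norm{\nabla w}^4 + C\norm{w}^2\norm{\nabla w}^2$. Absorbing the $\norm{\Delta w}^2$ pieces on the left produces
$$\frac{dE}{dt} + \nu\norm{\Delta w}^2 \leq C(1+\norm{w}^2)\norm{\nabla w}^2 + C\norm{w}^2\norm{\nabla w}^4.$$

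Next I would multiply by $e^{2\alpha t}$, integrate on $(0,t)$, and set
$g(t) := e^{2\alpha t}E(t) + \nu\int_0^t e^{2\alpha s}\norm{\Delta w(s)}^2\,ds.$
Using $\tfrac{d}{dt}(e^{2\alpha t}E) = e^{2\alpha t}E' + 2\alpha e^{2\alpha t}E$, the initial value $E(0)$ is bounded by $C\norm{w_0}_1^2$ thanks to the trace embedding \eqref{1.30}, and the auxiliary term $2\alpha\int_0^t e^{2\alpha s}E(s)\,ds$ is controlled by Lemma~\ref{flm1}, since each of the three constituents of $E$ matches (up to constants) one of the quantities appearing under the integral on the left of that lemma. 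Invoking $\norm{w(s)}^2 \leq e^{-2\alpha s}\norm{w_0}^2$ from Lemma~\ref{flm1}, the most delicate right-hand term satisfies
$$\int_0^t e^{2\alpha s}\norm{w}^2\norm{\nabla w}^4\,ds \leq \norm{w_0}^2\int_0^t \norm{\nabla w(s)}^2\cdot e^{2\alpha s}\norm{\nabla w(s)}^2\,ds \leq \norm{w_0}^2\int_0^t \norm{\nabla w(s)}^2\,g(s)\,ds,$$
because $e^{2\alpha s}\norm{\nabla w(s)}^2 \leq e^{2\alpha s}E(s) \leq g(s)$. Hence, for some $K=K(\norm{w_0}_1)$,
$g(t) \leq K + C\norm{w_0}^2\int_0^t \norm{\nabla w(s)}^2 g(s)\,ds,$
and the integral Gronwall inequality combined with Remark~\ref{frm2.1} (which gives $\int_0^\infty\norm{\nabla w(s)}^2\,ds \leq \norm{w_0}^2/(2\nu)$) produces $g(t) \leq K e^{C}$. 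Multiplying by $e^{-2\alpha t}$ gives the statement.

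The main obstacle is the cubic nonlinearity $(w(\nabla w\cdot{\bf{1}}),\Delta w)$: after Agmon and Young it unavoidably produces the quartic $\norm{w}^2\norm{\nabla w}^4$, which cannot be controlled pointwise by the $\nu\norm{\Delta w}^2$ dissipation. The trick that makes the argument go through is to split one copy of $\norm{\nabla w}^2$ off as an $L^1(0,\infty)$ Gronwall density (using the uniform bound $\int_0^\infty\norm{\nabla w}^2\,ds \leq \norm{w_0}^2/(2\nu)$ from Remark~\ref{frm2.1}) while bundling the other copy together with the weight $e^{2\alpha s}$ into the Gronwall unknown $g$. This is precisely why the final constant has the shape $Ce^{C}$ rather than being a polynomial in $\norm{w_0}$.
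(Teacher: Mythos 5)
Your proposal is correct and follows essentially the same route as the paper: testing \eqref{feq1.7} with (a weighted) $-\Delta w$, absorbing the feedback boundary term into an exact time derivative, reducing the convection terms to $C\norm{w}^2\norm{\nabla w}^2+C\norm{w}^2\norm{\nabla w}^4$, and closing by Gr\"onwall with an $L^1$-in-time density supplied by Lemma~\ref{flm1} and Remark~\ref{frm2.1}. The only deviation is cosmetic: you control the cubic term via Agmon's inequality, whereas the paper uses the Gagliardo--Nirenberg $L^4$ bounds, both yielding the same dominant quartic term and the same $Ce^{C}e^{-2\alpha t}$ bound.
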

\begin{proof}
	Form an $L^2$-inner product between \eqref{feq1.7} and $-e^{2\alpha t}\Delta w$ to obtain
	\begin{align}
	\frac{d}{dt}\norm{e^{\alpha t}\nabla w}^2-&2\alpha e^{2\alpha t}\norm{\nabla w}^2+2\nu\norm{e^{\alpha t}\Delta w}^2+\frac{2}{\nu}\int_{\partial\Omega}e^{2\alpha t}\Big(2(c_0+w_d)w+\frac{2}{9c_0}w^3\Big)w_t\; d\Gamma\notag\\
	&=2e^{2\alpha t}w_d(\nabla w\cdot {\bf{1}},\Delta w)+2e^{2\alpha t}B(w;w,\Delta w)\label{feq1.21}.
	\end{align}
	The fourth term on the left hand side of \eqref{feq1.21} can be rewritten as
	\begin{align*}
	\frac{2}{\nu}\int_{\partial\Omega}&e^{2\alpha t}\Big(2(c_0+w_d)w+\frac{2}{9c_0}w^3\Big)w_t\; d\Gamma\\
	&= \frac{d}{dt}\Big(\frac{2(c_0+w_d)}{\nu}\norm{e^{\alpha t}w}^2_{L^2(\partial\Omega)}+\frac{1}{9\nu c_0}\big(e^{2\alpha t}\norm{w}^4_{L^4(\partial\Omega)}\big)\Big)\notag\\
	&\qquad-2\alpha e^{2\alpha t}\Big(\frac{2(c_0+w_d)}{\nu}\norm{w}^2_{L^2(\partial\Omega)}+\frac{1}{9\nu c_0}\norm{w}^4_{L^4(\partial\Omega)}\Big).
	\end{align*}
	The terms on the right hand side of \eqref{feq1.21} are bounded by
	\begin{align*}
	2e^{2\alpha t}w_d(\nabla w\cdot{\bf{1}},\Delta w)\leq \frac{\nu}{2}\norm{e^{\alpha t}\Delta w}^2+\frac{2}{\nu}e^{2\alpha t}w_d^2\norm{\nabla w}^2,
	\end{align*}
	and using Gagliardo-Nirenberg inequality and Lemma \ref{flm1}, by
	\begin{align*}
	2e^{2\alpha t}B(w;w,\Delta w)&\leq Ce^{2\alpha t}\norm{w}_{L^4}\norm{\nabla w}_{L^4}\norm{\Delta w}\\
	&\leq Ce^{2\alpha t}\Big(\norm{w}^\frac{1}{2}\norm{\nabla w}^\frac{1}{2}+\norm{w}\Big)\Big(\norm{w}^\frac{1}{4}\norm{\Delta w}^\frac{3}{4}+\norm{w}\Big)\norm{\Delta w}\\
	&
	\leq Ce^{2\alpha t}\Big(\norm{w}^\frac{3}{4}\norm{\nabla w}^\frac{1}{2}\norm{\Delta w}^\frac{7}{4}+\norm{w}^\frac{5}{4}\norm{\Delta w}^\frac{7}{4}+\norm{w}^\frac{3}{2}\norm{\nabla w }^\frac{1}{2}\norm{\Delta w}\\
	&\qquad+\norm{w}^2\norm{\Delta w}\Big)\\
	&\leq \frac{\nu}{2}\norm{e^{\alpha t}\Delta w}^2+Ce^{2\alpha t}\norm{w}^2\norm{\nabla w}^4+Ce^{2\alpha t}\norm{w}^2+Ce^{2\alpha t}\norm{w}^2\norm{\nabla w}^2.
	\end{align*}
	Finally, from \eqref{feq1.21}, we arrive at
	\begin{align}
	\frac{d}{dt}\Big(e^{2\alpha t}\big(\norm{\nabla w}^2&+\frac{2(c_0+w_d)}{\nu}\norm{w}^2_{L^2(\partial\Omega)}+\frac{1}{9\nu c_0}\norm{w}^4_{L^4(\partial\Omega)}\big)\Big)+\nu\norm{e^{\alpha t}\Delta w}^2\notag\\
	&\leq 2\alpha e^{2\alpha t}\Big(\frac{2(c_0+w_d)}{\nu}\norm{w}^2_{L^2(\partial\Omega)}+
	\frac{1}{9\nu c_0}\norm{w}^4_{L^4(\partial\Omega)}\Big)+\frac{2}{\nu}e^{2\alpha t}w_d^2\norm{\nabla w}^2\notag\\
	&\qquad +Ce^{2\alpha t}\norm{w}^2+Ce^{2\alpha t}\norm{w}^2\norm{\nabla w}^2
	+Ce^{2\alpha t}\norm{w}^2\norm{\nabla w}^4\label{feq1.22}.
	\end{align}
	Integrate the above inequality from $0$ to $t,$ and then use the Gr\"onwall's inequality with Lemma \ref{flm1} to obtain
	\begin{align*}
	e^{2\alpha t}\big(\norm{\nabla w(t)}^2&+\frac{2(c_0+w_d)}{\nu}\norm{w(t)}^2_{L^2(\partial\Omega)}+\frac{1}{9\nu c_0}\norm{w(t)}^4_{L^4(\partial\Omega)}\big)+\nu\int_{0}^{t}\norm{e^{\alpha s}\Delta w(s)}^2\; ds\\&\leq C\Big(\norm{w_0}^2_1+\norm{w_0}^2_{L^2(\partial\Omega)}+\norm{w_0}^4_{L^4(\partial\Omega)}\Big)\exp\Big(C\int_{0}^{t}\norm{w}^2\big(1+\norm{\nabla w}^2\big) ds\Big).
	\end{align*}
	Use Remark \ref{frm2.1} for the integral term under the exponential sign, and then multiply the resulting inequality by $e^{-2\alpha t}$ to complete the rest of the proof.
\end{proof}
\begin{lemma}\label{flm3}
	Let $w_0\in H^1(\Omega)$. Then, there exists a positive constant  $C=C\Big(\norm{w_0}_1\Big)$ such that the following estimate holds.
	\begin{align*}
	\Big(\nu \norm{\nabla w(t)}^2&+2(c_0+w_d)\norm{w(t)}^2_{L^2(\partial\Omega)}+\frac{1}{9c_0}\norm{w(t)}^4_{L^4(\partial\Omega)}\Big)+e^{-2\alpha t}\int_{0}^{t}e^{2\alpha s}\norm{w_t(s)}^2 ds\leq Ce^Ce^{-2\alpha t}.
	\end{align*}
\end{lemma}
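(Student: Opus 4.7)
My plan is to mirror the proof of Lemma~\ref{flm2}, but with the test function $v = 2e^{2\alpha t}w_t$ chosen in the weak form \eqref{feq1.10} to expose the $\norm{w_t}^2$ contribution. Integration by parts of the viscous pairing $2\nu e^{2\alpha t}(\nabla w, \nabla w_t)$ yields $\frac{d}{dt}\bigl(e^{2\alpha t}\nu\norm{\nabla w}^2\bigr) - 2\alpha\nu e^{2\alpha t}\norm{\nabla w}^2$, and thanks to the specific form of the boundary feedback \eqref{feqx1}, the boundary integral $2\int_{\partial\Omega}\bigl(2(c_0+w_d)w+\tfrac{2}{9c_0}w^3\bigr)e^{2\alpha t}w_t\,d\Gamma$ collapses through the algebraic identities $2w w_t = (w^2)_t$ and $\tfrac{4}{9c_0}w^3w_t = \tfrac{1}{9c_0}(w^4)_t$ into the time derivative of the $L^2(\partial\Omega)$- and $L^4(\partial\Omega)$-norm pieces of the quantity $E(t) := e^{2\alpha t}\bigl(\nu\norm{\nabla w}^2 + 2(c_0+w_d)\norm{w}^2_{L^2(\partial\Omega)} + \tfrac{1}{9c_0}\norm{w}^4_{L^4(\partial\Omega)}\bigr)$ appearing in the claim, modulo a $2\alpha$ correction. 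The net identity is
\begin{equation*}
\frac{d}{dt}E(t) + 2e^{2\alpha t}\norm{w_t}^2 = 2\alpha E(t) - 2w_d e^{2\alpha t}\bigl(\nabla w\cdot{\bf 1}, w_t\bigr) - 2e^{2\alpha t}B(w;w,w_t).
\end{equation*}

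The linear convective pairing is controlled by Young's inequality, $|2w_d(\nabla w\cdot{\bf 1}, w_t)| \le \tfrac12\norm{w_t}^2 + C w_d^2\norm{\nabla w}^2$, whose residual $Cw_d^2 e^{2\alpha t}\norm{\nabla w}^2$ is integrable in time by the exponentially weighted $L^2$ estimate on $\nabla w$ from Lemma~\ref{flm1}. For the cubic term I would bound $|B(w;w,w_t)| \le \sqrt{2}\,\norm{w}_{L^\infty}\norm{\nabla w}\norm{w_t}$, substitute Agmon's inequality $\norm{w}_{L^\infty}\le C(\norm{w}^{1/2}\norm{\Delta w}^{1/2}+\norm{w})$, and apply Young once more to peel off another $\tfrac12 e^{2\alpha t}\norm{w_t}^2$. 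The resulting residue $Ce^{2\alpha t}(\norm{w}\norm{\Delta w}+\norm{w}^2)\norm{\nabla w}^2$ is tamed by the uniform pointwise bound $e^{2\alpha t}\norm{\nabla w(t)}^2\le Ce^C$ from Lemma~\ref{flm2}, reducing it to $Ce^C\bigl(\norm{w}\norm{\Delta w}+\norm{w}^2\bigr)$; Cauchy--Schwarz combined with $\int_0^t e^{2\alpha s}\norm{\Delta w(s)}^2\,ds\le Ce^C$ (Lemma~\ref{flm2}) and $\norm{w(t)}^2\le \norm{w_0}^2 e^{-2\alpha t}$ (Lemma~\ref{flm1}) then make this residue integrable in $s$.

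Absorbing the two $\norm{w_t}^2$ pieces into the left-hand side and integrating from $0$ to $t$ produces a linear integral inequality for $E$; Gronwall's lemma in the form used in Lemma~\ref{flm2}, whose exponent is bounded uniformly in $t$ by $\int_0^\infty\norm{w}^2(1+\norm{\nabla w}^2)\,ds\le C(\norm{w_0}_1)$ (Lemma~\ref{flm1} and Remark~\ref{frm2.1}), then yields $E(t)+\int_0^t e^{2\alpha s}\norm{w_s(s)}^2\,ds\le Ce^C$, and dividing through by $e^{2\alpha t}$ delivers the claim. The delicate step is the cubic pairing $B(w;w,w_t)$: routing its $L^\infty$ factor through Agmon's inequality rather than through Gagliardo--Nirenberg on $\norm{\nabla w}_{L^4}$ is essential, since the latter would produce a $\norm{\Delta w}^{3/2}$ factor whose exponentially weighted time-integral is not directly available from Lemma~\ref{flm2}, whereas Agmon leaves only a $\norm{\Delta w}$ factor that couples cleanly against $e^{2\alpha t}\norm{\nabla w}^2\le Ce^C$ and is in turn absorbed by the weighted $L^2(L^2)$-bound on $\Delta w$.
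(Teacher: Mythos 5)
Your energy identity, the collapse of the boundary terms into the time derivative of $E(t)$, the Young treatment of the linearized term, and the Agmon-based bound on $B(w;w,w_t)$ are all sound, and structurally this is the paper's own proof: same test function $e^{2\alpha t}w_t$ (up to the harmless factor $2$), same weighted energy, same reliance on Lemmas \ref{flm1} and \ref{flm2}. The only real deviation is the nonlinear term: the paper bounds $B(w;w,w_t)\le C\norm{w}_{L^4}\norm{\nabla w}_{L^4}\norm{w_t}$ and uses Gagliardo--Nirenberg, and the resulting $\norm{\Delta w}^{3/4}$ factor is not a dead end as you claim --- a further Young step (exponents $4/3$ and $4$ on $\norm{\Delta w}^{3/2}$ against $\norm{w}^{3/2}\norm{\nabla w}$) converts it into $\norm{\Delta w}^2$ plus $\norm{w}^2\norm{\nabla w}^4$-type residues, and $\int_0^t e^{2\alpha s}\norm{\Delta w}^2\,ds\le Ce^C$ from Lemma \ref{flm2} finishes it. So Agmon is a perfectly fine variant, but it is not ``essential''; both routes lean on exactly the same $\Delta w$ control.

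The genuine soft spot is your endgame. After integration your inequality carries the term $2\alpha\int_0^t E(s)\,ds$ on the right, and none of your residues produces a Gr\"onwall coefficient of the form $\norm{w}^2(1+\norm{\nabla w}^2)$ --- that exponent belongs to the proof of Lemma \ref{flm2}, not to the inequality you have derived. If instead you feed $2\alpha E(s)$ itself into Gr\"onwall, the coefficient is the constant $2\alpha$, the exponent is $2\alpha t$, and the conclusion degrades to $E(t)\le Ce^{2\alpha t}$, i.e. mere boundedness of $\nu\norm{\nabla w(t)}^2+2(c_0+w_d)\norm{w(t)}^2_{L^2(\partial\Omega)}+\frac{1}{9c_0}\norm{w(t)}^4_{L^4(\partial\Omega)}$ with no $e^{-2\alpha t}$ decay, which is strictly weaker than the lemma. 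The correct disposal --- and what the paper effectively does by moving the $2\alpha$-terms to the right-hand side before integrating --- is to bound $2\alpha\int_0^t E(s)\,ds$ directly: all three components $e^{2\alpha s}\norm{\nabla w}^2$, $e^{2\alpha s}\norm{w}^2_{L^2(\partial\Omega)}$ and $e^{2\alpha s}\norm{w}^4_{L^4(\partial\Omega)}$ have time integrals bounded by $C(\norm{w_0})$ by Lemma \ref{flm1} (note the range \eqref{decay} makes $\beta>0$). With that observation no Gr\"onwall argument is needed at all, since you have already shown every other residue to have a uniformly bounded weighted time integral, and the stated estimate with the $e^{-2\alpha t}$ factor follows upon multiplying by $e^{-2\alpha t}$.
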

\begin{proof}
	Choose $v=e^{2\alpha t}w_t$ in \eqref{feq1.10} to obtain
	\begin{align}
	2\norm{e^{\alpha t}w_t}^2+\nu \frac{d}{dt}\norm{e^{\alpha t}\nabla w}^2&-2\nu\alpha\norm{e^{\alpha t}\nabla w}^2+2\int_{\partial\Omega}\Big(2(c_0+w_d)w+\frac{2}{9c_0}w^3\Big)e^{2\alpha t}w_t \;d\Gamma\notag\\
	&=-2w_de^{2\alpha t}\big(\nabla w\cdot{\bf{1}},w_t\big)-2e^{2\alpha t}B\big(w;w,w_t\big)\label{feq1.31}.
	\end{align}
	The terms on the right hand side of \eqref{feq1.31} are bounded by
	\begin{align*}
	2w_de^{2\alpha t}\big(\nabla w\cdot{\bf{1}},w_t\big)\leq \frac{1}{2}e^{2\alpha t}\norm{w_t}^2+4e^{2\alpha t}w_d^2\norm{\nabla w}^2,
	\end{align*}
	and using Gagliardo-Nirenberg inequality and Lemma \ref{flm1}, by
	\begin{align*}
	2e^{2\alpha t}B\big(w;w,w_t\big)&\leq Ce^{2\alpha t}\norm{w}_{L^4}\norm{\nabla w}_{L^4}\norm{w_t}\\
	&\leq Ce^{2\alpha t}\Big(\norm{w}^\frac{1}{2}\norm{\nabla w}^\frac{1}{2}+\norm{w}\Big)\Big(\norm{w}^\frac{1}{4}\norm{\Delta w}^\frac{3}{4}+\norm{w}\Big)\norm{w_t}\\
	&\leq Ce^{2\alpha t}\Big(\norm{w}^\frac{3}{4}\norm{\nabla w}^\frac{1}{2}\norm{\Delta w}^\frac{3}{4}\norm{w_t}+\norm{w}^\frac{5}{4}\norm{\Delta w}^\frac{3}{4}\norm{w_t}\\
	&\qquad+\norm{w}^\frac{3}{2}\norm{\nabla w}^\frac{1}{2}\norm{w_t}+\norm{w}^2\norm{w_t}\Big)\\
	&\leq \frac{1}{2}e^{2\alpha t}\norm{w_t}^2+Ce^{2\alpha t}\norm{w}^2\norm{\nabla w}^4+Ce^{2\alpha t}\norm{\Delta w}^2+Ce^{2\alpha t}\norm{w}^2\\
	&\qquad+Ce^{2\alpha t}\norm{w}^2\norm{\nabla w}^2.
	\end{align*}
	Hence, rewriting the boundary integral term in \eqref{feq1.31} as in previous Lemma \ref{flm2}, we arrive from \eqref{feq1.31} at
	\begin{align*}
	\frac{d}{dt}\Big(e^{2\alpha t}\big(\nu \norm{\nabla w}^2&+2(c_0+w_d)\norm{w}^2_{L^2(\partial\Omega)}+\frac{1}{9c_0}\norm{w}^4_{L^4(\partial\Omega)}\big)\Big)+\norm{e^{\alpha t}w_t}^2\\
	&\leq Ce^{2\alpha t}\Big(\norm{w}^2_{L^2(\partial\Omega)}+\norm{w}^4_{L^4(\partial\Omega)}+\norm{\nabla w}^2+\norm{\Delta w}^2+\norm{w}^2\norm{\nabla w}^2\\
	&\hspace{2cm}+\norm{w}^2\norm{\nabla w}^4+\norm{w}^2\Big).
	\end{align*}
	Apply Lemmas \ref{flm1} and \ref{flm2}, and the Gr\"onwall's inequality to the above inequality to complete the rest of the proof.
\end{proof}
\begin{lemma}\label{flm4}
	Let $w_0\in H^2(\Omega)$. Then there exists a positive constant  $C=C\Big(\norm{w_0}_2\Big)$ such that
	\begin{align*}
	\norm{w_t(t)}^2+\norm{\Delta w(t)}^2+\nu e^{-2\alpha t}\int_{0}^{t}e^{2\alpha s}\norm{\nabla w_t(s)}^2 ds
	&+2e^{-2\alpha t}\int_{0}^{t}e^{2\alpha s}\Big(2(c_0+w_d)\norm{w_t(s)}^2_{L^2(\partial\Omega)}\\
	&+\frac{2}{3c_0}\norm{w(s)w_t(s)}^2_{L^2(\partial\Omega)}\Big) ds\leq Ce^Ce^{-2\alpha t}.
	\end{align*}
\end{lemma}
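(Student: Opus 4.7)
The plan is to differentiate the weak form \eqref{feq1.10} in time (legitimate under the compatibility condition \eqref{comp} and the regularity \eqref{ex}) and then test the resulting identity against $v = e^{2\alpha t} w_t$. Differentiation yields
\begin{align*}
(w_{tt},v) + \nu(\nabla w_t,\nabla v) &+ w_d(\nabla w_t\cdot{\bf{1}},v) + B(w_t;w,v) + B(w;w_t,v) \\
&+ \Big\langle 2(c_0+w_d)w_t + \tfrac{2}{3c_0}w^2 w_t,\,v\Big\rangle_{\partial\Omega} = 0,
\end{align*}
so the choice $v = e^{2\alpha t}w_t$ produces $\frac{d}{dt}\norm{e^{\alpha t}w_t}^2 - 2\alpha\norm{e^{\alpha t}w_t}^2 + 2\nu\norm{e^{\alpha t}\nabla w_t}^2$ on the left, and, crucially, the boundary contribution becomes exactly
$$2e^{2\alpha t}\Big(2(c_0+w_d)\norm{w_t}^2_{L^2(\partial\Omega)} + \tfrac{2}{3c_0}\norm{w\, w_t}^2_{L^2(\partial\Omega)}\Big),$$
which matches the dissipation in the stated inequality.

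Next I would estimate the three remaining terms. The linear convective term $w_d(\nabla w_t\cdot{\bf{1}},w_t)$ reduces on integration by parts to a pure boundary integral in $w_t^2$, which is absorbed into the $\norm{w_t}^2_{L^2(\partial\Omega)}$ coefficient via \eqref{decay}. For $B(w;w_t,w_t)$ I would integrate by parts to shift one derivative off $w_t$, producing $-\tfrac12(\nabla w\cdot{\bf{1}},w_t^2) + \tfrac12\int_{\partial\Omega} w\, w_t^2\,({\bf{1}}\cdot n)\,d\Gamma$; the interior term is bounded by $\norm{\nabla w}\,\norm{w_t}^2_{L^4}$ and the boundary term is absorbed into the coercive $\tfrac{2}{3c_0}\norm{w w_t}^2_{L^2(\partial\Omega)} + C\norm{w_t}^2_{L^2(\partial\Omega)}$ by Young's inequality. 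The cross term $B(w_t;w,w_t)$ is handled the same way via $\norm{w_t}^2_{L^4}\norm{\nabla w}$. A Gagliardo-Nirenberg estimate $\norm{w_t}^2_{L^4}\leq C(\norm{w_t}\norm{\nabla w_t} + \norm{w_t}^2)$ followed by Young's inequality lets me absorb a fraction of $\nu\norm{e^{\alpha t}\nabla w_t}^2$, with the remaining coefficients bounded uniformly by Lemmas \ref{flm1}-\ref{flm3}. The initial value is obtained from the PDE evaluated at $t=0$, namely $w_t(0) = \nu\Delta w_0 - (w_d + w_0)(\nabla w_0\cdot{\bf{1}})$, which lies in $L^2(\Omega)$ with $\norm{w_t(0)}\leq C(\norm{w_0}_2)$ since $H^2\hookrightarrow L^\infty$ in 2D. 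Integrating the resulting differential inequality from $0$ to $t$, applying Gr\"onwall (whose exponent is finite by the $L^2$-in-time bounds on $\norm{\nabla w}^2$ and $\norm{w_t}^2$ from Lemmas \ref{flm1} and \ref{flm3}), and multiplying by $e^{-2\alpha t}$ yields the $\norm{w_t(t)}^2$ bound and the two claimed dissipation integrals.

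Finally, to recover the $\norm{\Delta w(t)}^2$ piece I would rearrange the strong form \eqref{feq1.7} as $\nu\Delta w = w_t + w_d(\nabla w\cdot{\bf{1}}) + w(\nabla w\cdot{\bf{1}})$, take the $L^2$-norm, and control the nonlinear factor by $\norm{w\nabla w}\leq \norm{w}_{L^\infty}\norm{\nabla w}$ together with Agmon's inequality $\norm{w}_{L^\infty}\leq C(\norm{w}^{1/2}\norm{\Delta w}^{1/2}+\norm{w})$; one application of Young's inequality absorbs the resulting $\norm{\Delta w}^{1/2}$ factor into the left-hand side, and Lemmas \ref{flm1}-\ref{flm3} together with the just-obtained $\norm{w_t(t)}^2$ estimate close the bound. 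The main obstacle is the treatment of $B(w;w_t,w_t)$: a direct Cauchy-Schwarz argument would leave an uncontrolled $\norm{w}_{L^\infty}\norm{\nabla w_t}\norm{w_t}$ and, worse, would obscure the fact that $\tfrac{2}{3c_0}\norm{w w_t}^2_{L^2(\partial\Omega)}$ is a genuine dissipation coming from the feedback. Integrating by parts first — separating interior and boundary contributions — is what simultaneously permits the absorption of $\nu\norm{\nabla w_t}^2$ and lets the target $L^2(\partial\Omega)$- and $L^4(\partial\Omega)$-type boundary norms emerge with the stated constants.
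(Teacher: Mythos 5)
Your proposal is correct and takes essentially the same route as the paper: differentiate the equation in time, test with $e^{2\alpha t}w_t$ so that the feedback law produces exactly the stated boundary dissipation, absorb the convective terms via Gagliardo--Nirenberg and Young into $\nu\norm{\nabla w_t}^2$, bound $\norm{w_t(0)}$ through the compatibility condition, integrate in time using Lemmas \ref{flm1}--\ref{flm3}, and finally read off $\norm{\Delta w(t)}$ from the strong form of the equation. The only cosmetic differences are that you integrate $B(w;w_t,w_t)$ by parts and absorb the resulting boundary term into the feedback dissipation while the paper bounds it directly with $L^4$-norms, and that the paper avoids Gr\"onwall altogether by using the weighted time-integrability of $\norm{w_t}^2$ from Lemma \ref{flm3} (which you also cite) to treat the $2\alpha\norm{e^{\alpha t}w_t}^2$ term as forcing --- as you should, since putting that constant coefficient into a Gr\"onwall exponent would cancel the $e^{-2\alpha t}$ decay.
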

\begin{proof}
	Differentiate \eqref{feq1.7} with respect to $t$ and then take the inner product with $e^{2\alpha t}w_t$ to obtain 
	\begin{align}
	\frac{d}{dt}\big(&\norm{e^{\alpha t}w_t}^2\big)-2\alpha \norm{e^{\alpha t}w_t}^2+2\nu\norm{e^{\alpha t}\nabla w_t}^2+2\int_{\partial\Omega}\big(2(c_0+w_d)w_t^2+\frac{2}{3c_0}w^2w_t^2\big)e^{2\alpha t}\;d\Gamma\notag\\
	&= -2e^{2\alpha t}\Big(B\big(w_t;w,w_t\big)+B\big(w;w_t,w_t\big)\Big)-2w_de^{2\alpha t}\big(\nabla w_t\cdot{\bf{1}},w_t\big)\label{feq2.1}.
	\end{align}
	The right hand side terms in \eqref{feq2.1} are bounded by
	\begin{align*}
	-2e^{2\alpha t}&\Big(\big(\nabla w_t\cdot{\bf{1}},w_t\big)+B\big(w_t;w,w_t\big)+B\big(w;w_t,w_t\big)\Big)\\
	&\leq 2w_de^{2\alpha t}\norm{\nabla w_t}\norm{w_t}+Ce^{2\alpha t}\norm{w_t}_{L^4}\norm{\nabla w}\norm{w_t}_{L^4}+Ce^{2\alpha t}\norm{w}_{L^4}\norm{\nabla w_t}\norm{w_t}_{L^4}\\
	&\leq Ce^{2\alpha t}\norm{\nabla w_t}\norm{w_t}+Ce^{2\alpha t}\Big(\norm{w_t}^\frac{1}{2}\norm{\nabla w_t}^\frac{1}{2}+\norm{w_t}\Big)^2\norm{\nabla w}\\
	&\qquad+C\Big(\norm{w}^\frac{1}{2}\norm{\nabla w}^\frac{1}{2}+\norm{w}\Big)\Big(\norm{w_t}^\frac{1}{2}\norm{\nabla w_t}^\frac{1}{2}+\norm{w_t}\Big)\norm{\nabla w_t}\\
	&\leq \nu\norm{e^{\alpha t}\nabla w_t}^2+Ce^{2\alpha t}\Big(\norm{w_t}^2+\norm{w_t}^2\norm{\nabla w}^2+\norm{w}^2\norm{\nabla w}^2\norm{w_t}^2+\norm{w}^2\norm{\nabla w}^2\\
	&\hspace{4cm}+\norm{w_t}^2\norm{w}^2+\norm{w_t}^2\norm{w}^4+\norm{w}^2\Big).
	\end{align*}
	Hence, from \eqref{feq2.1}, we arrive at
	\begin{align}
	\frac{d}{dt}(\norm{e^{\alpha t}w_t}^2)+&\nu \norm{e^{\alpha t}\nabla w_t}^2+2e^{2\alpha t}
	\Big(2(c_0+w_d)\norm{w_t}^2_{L^2(\partial\Omega)}+\frac{2}{3c_0}\norm{ww_t}^2_{L^2(\partial\Omega)}\Big)\notag\\
	&\leq Ce^{2\alpha t}\Big(\norm{w_t}^2+\norm{w_t}^2\norm{\nabla w}^2+\norm{w}^2\norm{\nabla w}^2\norm{w_t}^2+\norm{w}^2\norm{\nabla w}^2\notag\\
	&\hspace{4cm}+\norm{w_t}^2\norm{w}^2+\norm{w_t}^2\norm{w}^4+\norm{w}^2\Big)\label{feq2.2}.
	\end{align}
	To calculate $\norm{w_t(0)},$
	take the inner product between \eqref{feq1.7} and $w_t$; and use compatibility condition to obtain
	\begin{align*}
	\norm{w_t(0)}^2\leq C\Big(\norm{\nabla w_0}^2+\norm{\Delta w_0}^2+\norm{w_0}^2\norm{\nabla w_0}^4\Big).
	\end{align*}
	Integrate the inequality \eqref{feq2.2} from $0$ to $t$ and then use Lemmas \ref{flm1}-\ref{flm3} to complete  the proof of $\norm{w_t(t)}.$  From this the estimate of $\norm{\Delta w(t)}$ follows. Altogether it completes   the rest of the proof.
\end{proof}
\begin{lemma}\label{flm5}
	Let $w_0\in H^3(\Omega).$ Then there exists a positive constant $C=C\Big(\norm{w_0}_3\Big)$ such that
	\begin{align*}
	\norm{\nabla w_t(t)}^2+\big(2(c_0+w_d)\norm{w_t(t)}^2_{L^2(\partial\Omega)}&+\frac{2}{3c_0}\norm{w(t)w_t(t)}^2_{L^2(\partial\Omega)}\big)+\nu e^{-2\alpha t}\int_{0}^{t}e^{2\alpha s}\norm{\Delta w_t(s)}^2 ds\\
	&\leq Ce^{C(\norm{w_0}_2)}e^{-2\alpha t}.
	\end{align*}
\end{lemma}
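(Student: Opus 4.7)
The plan is to differentiate \eqref{feq1.7} in $t$ and test against $-e^{2\alpha t}\Delta w_t$, by analogy with the way Lemma \ref{flm2} was obtained from \eqref{feq1.7} (and Lemma \ref{flm4} from its $t$-differentiated form). Differentiating the PDE in time gives
\begin{equation*}
w_{tt}-\nu\Delta w_t+w_d(\nabla w_t\cdot{\bf 1})+w_t(\nabla w\cdot{\bf 1})+w(\nabla w_t\cdot{\bf 1})=0,
\end{equation*}
while differentiating the control law \eqref{feqx1} yields $\frac{\partial w_t}{\partial n}=-\frac{1}{\nu}\bigl(2(c_0+w_d)w_t+\frac{2}{3c_0}w^2w_t\bigr)$ on $\partial\Omega$, using the compatibility condition \eqref{comp} to justify this at $t=0$.

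The next step is to form the $L^2$ inner product with $-e^{2\alpha t}\Delta w_t$. Integration by parts in the $(w_{tt},\Delta w_t)$ term produces $\tfrac12\frac{d}{dt}\|e^{\alpha t}\nabla w_t\|^2-\alpha\|e^{\alpha t}\nabla w_t\|^2$ plus the boundary contribution $-e^{2\alpha t}\int_{\partial\Omega}w_{tt}\frac{\partial w_t}{\partial n}\,d\Gamma$. Substituting the expression for $\frac{\partial w_t}{\partial n}$ and using the identities $2w_tw_{tt}=(w_t^2)_t$ and $2w^2w_tw_{tt}=(w^2w_t^2)_t-2ww_t^3$, this boundary term becomes an exact $\frac{d}{dt}$ of $\frac{c_0+w_d}{\nu}\|w_t\|^2_{L^2(\partial\Omega)}+\frac{1}{3c_0\nu}\|ww_t\|^2_{L^2(\partial\Omega)}$ (absorbed into $e^{\alpha t}$-weighted derivatives as in Lemmas \ref{flm2}--\ref{flm3}), together with a leftover $-\frac{2e^{2\alpha t}}{3c_0\nu}\int_{\partial\Omega}ww_t^3\,d\Gamma$ that must be controlled.

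On the right-hand side I will then bound the three convection contributions. The linear one, $2e^{2\alpha t}w_d(\nabla w_t\cdot{\bf 1},\Delta w_t)$, is handled by Young's inequality, hiding a small multiple of $\nu\|e^{\alpha t}\Delta w_t\|^2$ and leaving $Ce^{2\alpha t}\|\nabla w_t\|^2$. For $2e^{2\alpha t}(w_t\nabla w\cdot{\bf 1},\Delta w_t)$, Gagliardo--Nirenberg applied to $\|w_t\|_{L^4}$ and $\|\nabla w\|_{L^4}$, together with Lemmas \ref{flm2}, \ref{flm4}, yields $\tfrac{\nu}{6}\|e^{\alpha t}\Delta w_t\|^2$ plus a multiple of $e^{2\alpha t}(\|\nabla w_t\|^2+\|w_t\|^2)$ times quantities already known to be bounded. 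For $2e^{2\alpha t}(w\nabla w_t\cdot{\bf 1},\Delta w_t)$, Agmon's inequality on $\|w\|_{L^\infty}$ (with $\|\Delta w\|$ bounded via Lemma \ref{flm4}) similarly absorbs $\tfrac{\nu}{6}\|e^{\alpha t}\Delta w_t\|^2$. The bothersome boundary term is treated by $|ww_t^3|\le\tfrac{\varepsilon}{2}w^2w_t^2+\tfrac{1}{2\varepsilon}w_t^4$ and trace-type bounds $\|w_t\|^4_{L^4(\partial\Omega)}\le C\|w_t\|_{H^1}^4$, allowing part to be absorbed into the good $\|ww_t\|^2_{L^2(\partial\Omega)}$ and $\|\nabla w_t\|^2$ terms, with the remainder entering the Gr\"onwall factor.

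Finally, I will estimate the initial quantity $\|\nabla w_t(0)\|$. Using the PDE at $t=0$, $w_t(0)=\nu\Delta w_0-w_d(\nabla w_0\cdot{\bf 1})-w_0(\nabla w_0\cdot{\bf 1})$, and the Sobolev embedding $H^3(\Omega)\hookrightarrow W^{1,\infty}(\Omega)$ in 2D, one obtains $\|\nabla w_t(0)\|\le C(\|w_0\|_3)$; this explains the $H^3$-assumption. Integrating the resulting differential inequality from $0$ to $t$, applying Gr\"onwall with the coefficients bounded uniformly in $t$ by Lemmas \ref{flm1}--\ref{flm4}, and multiplying by $e^{-2\alpha t}$, I recover the claimed estimate with exponential decay rate $\alpha$. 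The hardest step is the balancing act in Step 3: the cubic boundary term $\int_{\partial\Omega}ww_t^3\,d\Gamma$ and the trilinear term $(w\nabla w_t\cdot{\bf 1},\Delta w_t)$ must be bounded so that (i) the part absorbed into $\nu\|e^{\alpha t}\Delta w_t\|^2$ leaves a positive coefficient, and (ii) the Gr\"onwall factor remains uniformly bounded in time, which is exactly where the previously established global $L^\infty$-in-time bounds on $\|\nabla w\|,\|\Delta w\|$ and $\|w_t\|$ are essential.
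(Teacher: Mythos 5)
Your proposal is correct and follows essentially the same route as the paper: differentiate the equation in time, test with $-e^{2\alpha t}\Delta w_t$, convert the boundary contribution into an exact time derivative plus the cubic leftover $\int_{\partial\Omega}ww_t^3\,d\Gamma$, absorb the convection terms into $\nu\|e^{\alpha t}\Delta w_t\|^2$ via Gagliardo--Nirenberg/Agmon, bound $\|\nabla w_t(0)\|$ by $C\|w_0\|_3$ using the equation at $t=0$ with the compatibility condition, and conclude by Gr\"onwall with Lemmas \ref{flm1}--\ref{flm4}. The only detail to add is the bound on the initial boundary quantities $\|w_t(0)\|_{L^2(\partial\Omega)}$ and $\|w(0)w_t(0)\|_{L^2(\partial\Omega)}$ (via the trace inequality \eqref{1.30} and the estimate of $\|w_t(0)\|_1$), exactly as done in the paper.
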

\begin{proof}
	Differentiate \eqref{feq1.7} with respect to $t$ and then take inner product with $-e^{2\alpha t}\Delta w_t$ to obtain
	\begin{align}
	\frac{d}{dt}\norm{e^{\alpha t}\nabla w_t}^2&-2\alpha\norm{e^{\alpha t}\nabla w_t}^2+2\nu\norm{e^{\alpha t}\Delta w_t}^2+\frac{d}{dt}\int_{\partial\Omega}e^{2\alpha t}\Big(2(c_0+w_d)w_t^2+\frac{2}{3c_0}w^2w_t^2\Big) d\Gamma\notag\\
	&\leq 2e^{2\alpha t}w_d(\nabla w_t\cdot{\bf{1}},\Delta w_t)+2e^{2\alpha t}B\big(w_t,w,\Delta w_t\big)+2e^{2\alpha t}B\big(w;w_t,\Delta w_t\big)\notag\\
	&\qquad+C\int_{\partial\Omega}e^{2\alpha t}\Big(w_t^2+ww_t^3+w^2w_t^2\Big)\;d\Gamma\label{feq2.5}.
	\end{align}
	The first three terms on the right hand side of \eqref{feq2.5} are bounded by
	\begin{align*}
	2e^{2\alpha t}w_d(\nabla w_t\cdot{\bf{1}},\Delta w_t)\leq \frac{\nu}{3}\norm{e^{\alpha t}\Delta w_t}^2+Ce^{2\alpha t}\norm{\nabla w_t}^2,
	\end{align*}
	and using Gagliardo-Nirenberg inequality and Lemma \ref{flm1}, by
	\begin{align*}
	2e^{2\alpha t}&\Big(B\big(w_t;w,\Delta w_t\big)+
	B\big(w;w_t,\Delta w_t\big)\Big)\\
	&\leq 
	Ce^{2\alpha t}\Big(\norm{w_t}_{L^4}\norm{\nabla w}_{L^4}\norm{\Delta w_t}+\norm{w}_{L^4}\norm{w_t}_{L^4}\norm{\Delta w_t}\Big)\\
	&\leq \frac{2\nu}{3}\norm{e^{\alpha t}\Delta w_t}^2+Ce^{2\alpha t}\norm{w_t}^2\Big(\norm{w}^2_2+\norm{w}^2\norm{\nabla w}^4\Big)+Ce^{2\alpha t}\norm{\nabla w_t}^2\Big(\norm{w}^2+\norm{\Delta w}^2\Big).
	\end{align*}
	The boundary terms on the right hand side of \eqref{feq2.5} are bounded by
	\begin{align*}
	C\int_{\partial\Omega}e^{2\alpha t}\Big(w_t^2+ww_t^3+w^2w_t^2\Big)\;d\Gamma&\leq C\int_{\partial\Omega}e^{2\alpha t}\Big(w_t^2+w^2w_t^2\Big)\;d\Gamma+Ce^{2\alpha t}\norm{w_t}^4_{L^4(\partial\Omega)}.
	\end{align*}
	Therefore, from \eqref{feq2.5}, we arrive at
	\begin{align*}
	\frac{d}{dt}\Big(\norm{e^{\alpha t}\nabla w_t}^2&+2(c_0+w_d)\norm{e^{\alpha t}w_t}^2_{L^2(\partial\Omega)}+\frac{2}{3c_0}\int_{\partial\Omega}e^{2\alpha t}w^2w_t^2\;d\Gamma\Big)+\nu\norm{e^{\alpha t}\Delta w_t}^2\\
	&\leq Ce^{2\alpha t}\norm{w_t}^2\Big(\norm{w}^2_2+\norm{w}^2\norm{\nabla w}^4\Big)+Ce^{2\alpha t}\norm{\nabla w_t}^2\Big(1+\norm{w}^2+\norm{\Delta w}^2\Big)\\
	&\qquad+\int_{\partial\Omega}e^{2\alpha t}\Big(w_t^2+w^2w_t^2\Big)\;ds+Ce^{2\alpha t}\Big(\norm{w_t}^4+\norm{\nabla w_t}^4\Big).
	\end{align*}
	Integrate the above inequality from $0$ to $t$ and then apply the Gr\"onwall's inequality along with Lemmas \ref{flm1}-\ref{flm4} to obtain
	\begin{align}
	\Big(\norm{e^{\alpha t}\nabla w_t(t)}^2&+2(c_0+w_d)\norm{e^{\alpha t}w_t(t)}^2_{L^2(\partial\Omega)}+\frac{2}{3c_0}\int_{\partial\Omega}e^{2\alpha t}w(t)^2w_t(t)^2\;d\Gamma\Big)+\nu \int_{0}^{t}\norm{e^{\alpha s}\Delta w_t(s)}^2 ds\notag\\
	&\leq C\Big(\norm{\nabla w_t(0)}^2+\norm{w_t(0)}_{L^2(\partial\Omega)}+\norm{w(0)w_t(0)}^2_{L^2(\partial\Omega)}\Big)\notag\\
	&\qquad\exp\Big(C\int_{0}^{t}\big(\norm{w(s)}^2+\norm{\Delta w(s)}^2+\norm{\nabla w_t(s)}^2\big)\;ds\Big)\label{feqx2.5}.
	\end{align}
	Differentiate \eqref{feq1.7} with respect to $x_1$ and $x_2$ and applying compatibility condition to arrive at
	$\norm{\nabla w_t(0)}\leq C\norm{w_0}_3$. Also, by \eqref{1.30}, $\norm{w_t(0)}_{L^2(\partial\Omega)}\leq C\norm{w_t(0)}^2_1$ and 
	$\norm{w(0)w_t(0)}^2_{L^2(\partial\Omega)}\leq C\norm{w(0)}_{L^4(\partial\Omega)}\norm{w_t(0)}^2_1.$\\
	Again, use of Lemmas \ref{flm1}, \ref{flm2} and \ref{flm4} for the above inequality \eqref{feqx2.5} completes the proof.
\end{proof}
\section{Finite element method}
In this section, we discuss semidiscrete Galerkin approximation keeping  the  time variable continuous and show global stabilization result for the spatially discrete or semidiscrete solution. Further optimal error estimates ( optimality with respect to approximation property)  for both state variable and feedback controller are derived. With respect to $H^s$ regularity of $w$, the optimal error estimate satisfy $\norm{w(t)-w_h(t)}_{H^m(\Omega)}\leq Ch^{\min(s,k+1)-m}\norm{w(t)}_{H^s(\Omega)}$, where $m$, $s$ ( $0\leq m<s$)  are integers and $k$ is the degree of polynomial used, Here $m=0$ or $1$, $k=1$ and $s=2$.\\
For our semidiscrete analysis we assume $\Omega$ is a convex polygonal domain with boundary
$\partial\Omega$, otherwise we can always get a polygonal domain $\Omega_h\subset \Omega$, where union of quasi-uniform  triangulation determines $\Omega_h$ with boundary vertices on $\partial\Omega$. For more details see \cite{thomee}.
Given a regular triangulation  $\mathcal{T}_h$ of $\overline{\Omega}$, 
let $h_K=\text{diam}(K)$ for all $K\in \mathcal{T}_h$ and $h=\displaystyle\max_{ K\in \mathcal{T}_h} h_K$.\\
Set
$$V_h=\left\{v_h\in C^0(\overline{\Omega} ):\hspace{0.1cm} v_h\Big|_K \in \mathcal{P}_{1}(K) \quad \forall\hspace{0.1cm} K\in \mathcal{T}_{h}\right\}.$$
We shall assume further that  the following inverse property hold for each $v_h\in V_h$ and $p\in [2,\infty],$ see, \cite{ciarlet}
\begin{equation}\label{inverse-property}
\norm{v_h}_{L^p(\Omega)} \leq C \;h^{2(\frac{1}{p}-\frac{1}{2})} \;\norm{v_h}.
\end{equation}
The semidiscrete approximation corresponding to the problem  \eqref{feq1.10} is to seek $w_h(t)=w_h(\cdot,t)\in V_h$ such that
\begin{align}
(w_{ht},\chi)+&\nu(\nabla w_h,\nabla \chi)+w_d\big(\nabla w_h\cdot {\bf{1}},\chi\big)+B\big(w_h;w_h,\chi\big)\notag\\
&\qquad+\int_{\partial \Omega}\Big(2(c_0+w_d)w_h+\frac{2}{9c_0}w_h^3\Big)\chi\; d\Gamma
=0,\quad \forall~ \chi\in V_h\label{feqn3.1}
\end{align}
with $w_h(0)=P_h u_0-w_d=w_{0h}$ (say), an approximation of $w_0,$
where, $P_hu_0$ is the $H^1$ projection of $u_0$ onto $V_h$ such that
\begin{equation}
\norm{u_0-u_{0h}}_j\leq Ch^{2-j}\norm{u_0}_2\quad j=0,1.
\end{equation}
Since $V_h$ is finite dimensional, \eqref{feqn3.1} leads to a system of nonlinear
ODEs. Hence, an application of Picard's theorem ensures the existence of a unique
solution locally, that is, there exists an interval $(0,t_h)$   such that $w_h$
exists for $t\in (0,t_h)$. Then, using the boundedness of the discrete solution
from Lemma \ref{flm3.1} below, the continuation arguments
yields existence of a unique solution for all $t>0$.

In a similar fashion as in continuous case, the following stabilization result holds for the semidiscrete solution.
\begin{lemma}\label{flm3.1}
	Let $w_0\in L^2(\Omega)$.Then, there holds
	\begin{align*}
	\norm{w_h(t)}^2&+\beta e^{-2\alpha t}\int_{0}^{t}e^{2\alpha s}\big(\norm{\nabla w_h(s)}^2+\norm{w_h(s)}^2_{L^2(\partial\Omega)}\big)ds+\frac{1}{3c_0}e^{-2\alpha t}\int_{0}^{t}e^{2\alpha s}\norm{w_h(s)}^4_{L^4(\partial\Omega)} ds\\
	&\leq e^{-2\alpha t}\norm{w_{0h}}^2.
	\end{align*}
\end{lemma}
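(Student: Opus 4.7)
The proof will mirror the continuous-case argument of Lemma \ref{flm1} verbatim, since every inequality used there (trace, Friedrichs's, Young's, integration by parts on $\partial\Omega$) is available to $w_h$ because $V_h\subset H^1(\Omega)$. The plan is to choose the test function $\chi=e^{2\alpha t}w_h\in V_h$ in \eqref{feqn3.1}, which is admissible since $V_h$ is a linear space.

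Substituting this test function and using the identity $2(w_{ht},e^{2\alpha t}w_h)=\frac{d}{dt}\|e^{\alpha t}w_h\|^2-2\alpha\|e^{\alpha t}w_h\|^2$, one obtains the discrete analogue of \eqref{feq1.11}:
\begin{align*}
\frac{d}{dt}\norm{e^{\alpha t}w_h}^2 - 2\alpha\norm{e^{\alpha t}w_h}^2 &+ 2\nu\norm{e^{\alpha t}\nabla w_h}^2 + 2e^{2\alpha t}\!\int_{\partial\Omega}\!\Big((2c_0+2w_d)w_h^2+\tfrac{2}{9c_0}w_h^4\Big)d\Gamma\\
&=-2w_d e^{2\alpha t}\bigl((\nabla w_h\cdot\mathbf{1}),w_h\bigr)-2e^{2\alpha t}B(w_h;w_h,w_h).
\end{align*}
For the right-hand side, I would apply the bounds \eqref{estimate:linearised-term} and \eqref{estimate:nonlinear-term} with $w$ replaced by $w_h$; these estimates depend only on integration by parts and the fact that $|\nu_j|\le 1$, so they transfer unchanged to the discrete setting. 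This yields precisely the analogue of \eqref{feq1.12}--\eqref{feq1.13} for $w_h$.

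Next, I would apply Friedrichs's inequality \eqref{feq1} to $w_h\in H^1(\Omega)$ to handle the $-2\alpha\|e^{\alpha t}w_h\|^2$ term, exactly as in \eqref{feq1.14}. Collecting terms produces the discrete counterpart of \eqref{feq1.15}:
\begin{align*}
\frac{d}{dt}\norm{e^{\alpha t}w_h}^2 + 2(\nu-\alpha C_F)\norm{e^{\alpha t}\nabla w_h}^2 + 2e^{2\alpha t}\Big((c_0+w_d-\alpha C_F)\!\int_{\partial\Omega}\!w_h^2\,d\Gamma + \tfrac{1}{6c_0}\!\int_{\partial\Omega}\!w_h^4\,d\Gamma\Big)\le 0.
\end{align*}
Since $\alpha$ satisfies \eqref{decay}, the coefficients on the left are non-negative and $\beta=2\min\{\nu-\alpha C_F,c_0+w_d-\alpha C_F\}\ge 0$. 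Integrating from $0$ to $t$, multiplying by $e^{-2\alpha t}$, and using $w_h(0)=w_{0h}$ then yields the claimed estimate.

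I do not anticipate a real obstacle here: the finite-element conformity $V_h\subset H^1(\Omega)$ means every continuous-level inequality is directly available, so the proof is a mechanical transcription of Lemma \ref{flm1} with $w$ replaced by $w_h$. The only subtle point worth flagging is verifying that $w_h(t)$ exists for all $t>0$; the a priori bound just derived provides the uniform control that, combined with Picard's theorem, extends the local-in-time solution to a global one.
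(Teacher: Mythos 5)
Your proposal is correct and coincides with the paper's intended argument: the paper gives no separate proof for this lemma, stating only that it follows "in a similar fashion as in the continuous case," which is exactly your transcription of Lemma \ref{flm1} with the admissible test function $\chi=e^{2\alpha t}w_h\in V_h$, the boundary-term bounds \eqref{estimate:linearised-term}--\eqref{estimate:nonlinear-term} (valid since $w_h^2$, $w_h^3$ are in $W^{1,1}(\Omega)$ for conforming piecewise polynomials, so the divergence theorem applies), and Friedrichs's inequality. Your closing remark on using the a priori bound with Picard's theorem for global existence also matches the continuation argument the paper invokes just before the lemma.
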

\subsection{Error estimates}
Define an auxiliary projection $\tilde w_h\in V_h$ of $w$ through the following form
\begin{equation}\label{feq3.1}
\Big(\nabla(w-\tilde w_h),\nabla\chi\Big)+\lambda\Big(w-\tilde w_h,\chi\Big)=0\quad \forall~ \chi\in V_h,
\end{equation}
where $\lambda\geq 1$ is some fixed positive number. For a given $w,$ the existence of a unique $\tilde w_h$ follows by the Lax-Milgram Lemma. 
Let $\eta:=w-\tilde w_h$ be the error involved in the auxiliary projection. Then, the following error estimates hold: 
\begin{align}\label{feq3.2}
\norm{\eta}_j&\leq C h^{\min(2,m)-j}\;\norm{w}_m, \;\text{and}\notag\\
\norm{\eta _t}_j&\leq C h^{\min(2,m)-j}\norm{w_t}_m,\;\;
j=0,1 \;\mbox{ and }\; m=1,2.
\end{align}
For a proof, we refer to Thom\'{e}e \cite{thomee}, \cite{ciarlet}.
Following Lemma \ref{x1} is needed to establish  error estimates.
\begin{lemma}\label{x1}
Let $F\in H^{3/2+\epsilon}(\Omega)$, for some $\epsilon>0$, and $G\in H^{1/2}(\partial\Omega).$ Then $FG\in H^{1/2}(\partial\Omega)$ and
$$\norm{FG}_{H^{1/2}(\partial\Omega)}\leq C\norm{F}_{H^{3/2+\epsilon}(\Omega)}\norm{G}_{H^{1/2}(\partial\Omega)}.$$
\end{lemma}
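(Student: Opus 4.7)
The plan is to first transport $F$ to the boundary via the trace inequality \eqref{1.31}, giving $F|_{\partial\Omega}\in H^{1+\epsilon}(\partial\Omega)$ with $\|F\|_{H^{1+\epsilon}(\partial\Omega)}\leq C\|F\|_{H^{3/2+\epsilon}(\Omega)}$. Since $\partial\Omega$ is a one-dimensional compact manifold, the fractional Sobolev embedding yields $H^{1+\epsilon}(\partial\Omega)\hookrightarrow C^{0,1/2+\epsilon}(\partial\Omega)$; in particular $F$ is bounded and H\"older continuous on $\partial\Omega$ with exponent $\alpha=\tfrac12+\epsilon>\tfrac12$, and
\begin{equation*}
\|F\|_{L^\infty(\partial\Omega)}+[F]_{C^{0,\alpha}(\partial\Omega)}\leq C\|F\|_{H^{3/2+\epsilon}(\Omega)}.
\end{equation*}

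Next I would use the Slobodeckij characterisation
\begin{equation*}
\|FG\|_{H^{1/2}(\partial\Omega)}^2=\|FG\|_{L^2(\partial\Omega)}^2+\int_{\partial\Omega}\int_{\partial\Omega}\frac{|F(x)G(x)-F(y)G(y)|^2}{|x-y|^{2}}\,dS(x)\,dS(y),
\end{equation*}
where the exponent $2$ comes from $1+2s$ with $s=1/2$ on a $1$D manifold. The $L^2$ piece is immediate by H\"older: $\|FG\|_{L^2(\partial\Omega)}\leq \|F\|_{L^\infty(\partial\Omega)}\|G\|_{L^2(\partial\Omega)}$. For the seminorm I would split
\begin{equation*}
F(x)G(x)-F(y)G(y)=F(x)\bigl(G(x)-G(y)\bigr)+G(y)\bigl(F(x)-F(y)\bigr),
\end{equation*}
so the double integral is bounded by $2I_1+2I_2$ with $I_1\leq \|F\|_{L^\infty(\partial\Omega)}^2\,[G]_{H^{1/2}(\partial\Omega)}^2$ and
\begin{equation*}
I_2=\int_{\partial\Omega}\int_{\partial\Omega}\frac{|F(x)-F(y)|^2|G(y)|^2}{|x-y|^2}\,dS(x)\,dS(y)\leq [F]_{C^{0,\alpha}(\partial\Omega)}^2\int_{\partial\Omega}|G(y)|^2\!\int_{\partial\Omega}\frac{dS(x)}{|x-y|^{2-2\alpha}}\,dS(y).
\end{equation*}

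The key point is that $2-2\alpha=1-2\epsilon<1$, so the inner integral over the compact $1$D curve $\partial\Omega$ is uniformly bounded in $y$; this yields $I_2\leq C[F]_{C^{0,\alpha}(\partial\Omega)}^2\|G\|_{L^2(\partial\Omega)}^2$. Combining the three estimates,
\begin{equation*}
\|FG\|_{H^{1/2}(\partial\Omega)}^2\leq C\bigl(\|F\|_{L^\infty(\partial\Omega)}^2+[F]_{C^{0,\alpha}(\partial\Omega)}^2\bigr)\|G\|_{H^{1/2}(\partial\Omega)}^2\leq C\|F\|_{H^{3/2+\epsilon}(\Omega)}^2\|G\|_{H^{1/2}(\partial\Omega)}^2,
\end{equation*}
which is the desired inequality. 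The only delicate step is verifying the singular integral in $I_2$: the gain of H\"older regularity strictly above $1/2$ (guaranteed by the $\epsilon$-surplus in the hypothesis) is what makes $2-2\alpha<1$ and hence renders the kernel integrable on the $1$D boundary; without that strict inequality the argument would fail, so this is where the assumption $\epsilon>0$ is essential.
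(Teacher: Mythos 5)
Your argument is correct, and it is genuinely different from what the paper does: the paper offers no proof at all for Lemma \ref{x1}, merely citing Douglas and Dupont \cite{dt73}, whereas you give a self-contained derivation. Your route --- trace $F$ onto $\partial\Omega$ via \eqref{1.31}, upgrade to H\"older continuity with exponent $\tfrac12+\epsilon$ by the one-dimensional Sobolev embedding, and then estimate the Slobodeckij seminorm of the product by splitting $F(x)G(x)-F(y)G(y)$ --- is sound, and you correctly identify the crux: the surplus $\epsilon>0$ makes the kernel exponent $2-2\alpha=1-2\epsilon<1$, so the singular integral over the compact curve converges uniformly, which is exactly where the hypothesis $H^{3/2+\epsilon}$ rather than $H^{3/2}$ enters. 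Two small points you should make explicit to be airtight: first, reduce without loss of generality to $0<\epsilon<\tfrac12$ (since $H^{3/2+\epsilon}\hookrightarrow H^{3/2+\epsilon'}$ for $\epsilon'<\epsilon$), so that $s=1+\epsilon$ lies in the range $s\le\tfrac32$, $s\neq1$ of the trace inequality \eqref{1.31} and the H\"older exponent stays below $1$; second, note that on the smooth compact curve $\partial\Omega$ the intrinsic $H^{1/2}(\partial\Omega)$ norm is equivalent to the double-integral Slobodeckij norm you use (with Euclidean and arc-length distances comparable), so your characterization is legitimate. What your approach buys is transparency and independence from the reference; what the paper's citation buys is brevity and a statement valid in the generality of \cite{dt73} without re-deriving multiplier estimates.
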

\begin{proof}
For a proof see \cite{dt73}.
\end{proof}
In addition, for proving error estimates for state variable and feedback controllers, we need the following estimates of $\eta$ and $\eta_t$ at boundary.
\begin{lemma}\label{flm3.6}
	For smooth $\partial\Omega,$ there holds	
	\begin{align*}
&\norm{\eta}_{L^2(\partial\Omega)}\leq Ch^{3/2}\norm{w}_2, \quad	\norm{\eta}_{H^{-1/2}(\partial\Omega)}\leq Ch^2\norm{w}_2, \quad \norm{\eta_t}_{H^{-1/2}(\partial\Omega)}\leq Ch^2\norm{w_t}_2,\\
	&\norm{\eta}_{L^q(\partial\Omega)}\leq Ch\norm{w}_2, \quad\text{and} \quad \norm{\eta_t}_{L^q(\partial\Omega)}\leq Ch\norm{w_t}_{2},\;\; 2\leq q <\infty.
	\end{align*}
\end{lemma}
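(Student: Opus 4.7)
The five estimates split into three natural groups: a Sobolev--trace bound for the $L^q(\partial\Omega)$ norms, a multiplicative trace interpolation for the $L^2(\partial\Omega)$ norm, and a Nitsche-type duality argument for the $H^{-1/2}(\partial\Omega)$ norms. All three rely on the standard $H^j$ error bounds in \eqref{feq3.2} for the auxiliary projection together with the trace results \eqref{1.30}, \eqref{1.31}.

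First I would dispose of the $L^q(\partial\Omega)$ bounds: since $\eta,\eta_t \in H^1(\Omega)$, the trace embedding \eqref{1.30} gives $\norm{\eta}_{L^q(\partial\Omega)} \le C\norm{\eta}_1$ (and similarly for $\eta_t$) for all $2\le q<\infty$, and \eqref{feq3.2} with $j=1$, $m=2$ immediately yields the claimed $O(h)$ decay. For the $L^2(\partial\Omega)$ bound I would invoke the multiplicative trace inequality
\begin{equation*}
\norm{v}_{L^2(\partial\Omega)}^2 \le C\bigl(\norm{v}\,\norm{v}_1 + \norm{v}^2\bigr), \qquad v\in H^1(\Omega),
\end{equation*}
which, combined with the $O(h^2)$ bound on $\norm{\eta}$ and the $O(h)$ bound on $\norm{\eta}_1$ from \eqref{feq3.2}, gives $\norm{\eta}_{L^2(\partial\Omega)}^2\le Ch^3\norm{w}_2^2$, precisely the claimed $O(h^{3/2})$.

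The real work is in the $H^{-1/2}(\partial\Omega)$ estimates, which I would obtain by a duality/Nitsche trick. Given $g\in H^{1/2}(\partial\Omega)$, consider the auxiliary Neumann problem
\begin{equation*}
-\Delta\psi+\lambda\psi = 0 \ \text{in }\Omega, \qquad \frac{\partial\psi}{\partial n}=g \ \text{on }\partial\Omega.
\end{equation*}
Since $\lambda>0$ no compatibility condition on $g$ is needed, and smoothness of $\partial\Omega$ gives elliptic regularity $\norm{\psi}_2 \le C\norm{g}_{H^{1/2}(\partial\Omega)}$. Its weak form reads $(\nabla\psi,\nabla v)+\lambda(\psi,v) = \langle g,v\rangle_{\partial\Omega}$ for all $v\in H^1(\Omega)$. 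Choosing $v=\eta$ and subtracting the Galerkin orthogonality \eqref{feq3.1} with $\chi=I_h\psi$ a standard nodal interpolant yields
\begin{equation*}
\langle g,\eta\rangle_{\partial\Omega} = \bigl(\nabla(\psi-I_h\psi),\nabla\eta\bigr) + \lambda(\psi-I_h\psi,\eta).
\end{equation*}
Cauchy--Schwarz together with $\norm{\psi-I_h\psi}_j\le Ch^{2-j}\norm{\psi}_2$ and the projection bounds $\norm{\eta}_1\le Ch\norm{w}_2$, $\norm{\eta}\le Ch^2\norm{w}_2$ produces the upper bound $Ch^2\norm{w}_2\norm{g}_{H^{1/2}(\partial\Omega)}$. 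Taking the supremum over $g$ gives $\norm{\eta}_{H^{-1/2}(\partial\Omega)}\le Ch^2\norm{w}_2$. Differentiating \eqref{feq3.1} in $t$ shows $\eta_t$ satisfies the same Galerkin orthogonality, so an identical argument delivers the bound for $\norm{\eta_t}_{H^{-1/2}(\partial\Omega)}$.

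The main obstacle is exactly the $H^2$ regularity of the adjoint Neumann problem with non-zero flux in $H^{1/2}(\partial\Omega)$: this is where smoothness of $\partial\Omega$ is essential and where one must be careful should the domain fail to be smooth. The remaining estimates are straightforward combinations of \eqref{feq3.2} and the trace results \eqref{1.30}--\eqref{1.31}.
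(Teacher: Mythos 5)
Your proposal is correct, and for the $L^q(\partial\Omega)$ and $H^{-1/2}(\partial\Omega)$ estimates it is essentially the paper's own argument: the $L^q$ bounds come from the trace embedding \eqref{1.30} plus \eqref{feq3.2}, and the $H^{-1/2}$ bounds come from the Douglas--Dupont duality with the adjoint problem $-\Delta\psi+\lambda\psi=0$, $\partial\psi/\partial n = g$, elliptic regularity $\norm{\psi}_2\le C\norm{g}_{H^{1/2}(\partial\Omega)}$, and the orthogonality \eqref{feq3.1}; the paper fixes one extremal datum $\delta$ via Hahn--Banach and uses best approximation where you take a supremum over $g$ and a nodal interpolant, a purely cosmetic difference (your remark that $\eta_t$ obeys the same orthogonality after differentiating \eqref{feq3.1} in $t$ is also how the paper's ``similarly'' is meant). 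The one place you genuinely diverge is the $L^2(\partial\Omega)$ estimate: the paper runs a second duality argument, introducing the auxiliary Neumann problem \eqref{feq3.3} with boundary data $\eta$, using $\norm{\phi}_2\le C\norm{\eta}_{H^{1/2}(\partial\Omega)}$, Galerkin orthogonality, and then the fractional trace inequality \eqref{1.31} with $s=1/2$ to close with $\norm{\eta}_{H^{1/2}(\partial\Omega)}\le C\norm{\eta}_1\le Ch\norm{w}_2$, whereas you invoke the multiplicative trace inequality $\norm{v}^2_{L^2(\partial\Omega)}\le C\norm{v}\,\norm{v}_1$ directly on the known bounds $\norm{\eta}\le Ch^2\norm{w}_2$, $\norm{\eta}_1\le Ch\norm{w}_2$. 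Both yield the same $O(h^{3/2})$ rate; your route is shorter and needs no extra elliptic regularity for that step, at the cost of using an inequality not listed among the paper's preliminaries (it is, however, standard on Lipschitz domains, and can even be read off from the paper's own derivation of Friedrichs's inequality via the boundary term $\int_{\partial\Omega} y^2 (x\cdot n)\,d\Gamma$), while the paper's route keeps all ingredients within its stated toolbox.
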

\begin{proof}
	Consider an auxiliary function $\phi$ satisfying the following problem
	\begin{align}
	-\Delta \phi+\lambda \phi&=0 \quad \text{in} \quad\Omega\label{feq3.3},\\\frac{\partial \phi}{\partial\nu}&=\eta \quad \text{on} \quad\partial\Omega,\notag
	\end{align}
	with $\norm{\phi}_2\leq C\norm{\eta}_{H^\frac{1}{2}(\partial\Omega)}$. For a proof of this regularity result see \cite{lm68}.\\
	Take the inner product between \eqref{feq3.3} and $\eta$ to obtain
	\begin{align*}
	\norm{\eta}^2_{L^2(\partial\Omega)}=(\nabla \phi,\nabla\eta)+\lambda(\phi,\eta)&=(\nabla \phi-\nabla\tilde{\phi_h},\nabla\eta)+\lambda(\phi-\tilde{\phi_h},\eta)\\
	&\leq ch^2\norm{\phi}_2\norm{w}_2+Ch^4\norm{\phi}_2\norm{w}_2\leq Ch^2\norm{w}_2\norm{\eta}_{H^\frac{1}{2}(\partial\Omega)}.
	\end{align*}
	Using the Trace inequality \eqref{1.31} for $s=1/2$, we arrive at
	\begin{align*}
	\norm{\eta}^2_{L^2(\partial\Omega)}\leq Ch^2\norm{w}^2_2\norm{\eta}_{H^1(\Omega)}\leq Ch^3\norm{w}^2_2.
	\end{align*}	
	Hence, $\norm{\eta}_{L^2(\partial\Omega)}\leq Ch^\frac{3}{2}\norm{w}_2$.\\
	The idea for showing estimate $\norm{\eta}_{H^{-1/2}(\partial\Omega)}$ using variant of the  Aubin-Nitsche technique can be found in \cite{dt73}. For completeness, we provide a brief proof here.
	Let $\beta=\beta(t)$ be the solution of 
	\begin{equation}\label{xl3.1}
	(\nabla \beta,\nabla\chi)+\lambda(\beta,\chi)=\langle\delta,\chi\rangle_{\partial \Omega},
	\end{equation}
	where $\delta\in H^{1/2}(\partial\Omega)$ is such that
	$$\norm{\delta}_{H^{1/2}(\partial\Omega)}=\norm{\eta}_{H^{-1/2}(\partial\Omega)}, \quad \langle\delta,\eta\rangle_{\partial \Omega}=\norm{\eta}^2_{H^{-1/2}(\partial\Omega)},$$
	where the existence of $\delta$ follows from the Hahn-Banach theorem.
	Set $\chi=\eta$ and use \eqref{feq3.1} to obtain
	\begin{align*}
\norm{\eta}^2_{H^{-1/2}(\partial\Omega)}&=(\nabla \eta, \nabla \beta)+\lambda(\eta,\beta)\\
&=(\nabla \eta, \nabla (\beta-\phi))+\lambda(\eta,\beta-\phi)\leq C\norm{\eta}_{1}\norm{\beta-\phi}_1 \quad \forall \phi\in V_h.
	\end{align*}
	Therefore using $\inf_{\chi \in V_h}\norm{v-\chi}_i\leq Ch^{2-i}\norm{v}_2, \quad i=0,1$ and \eqref{feq3.3}, it follows that
	\begin{align*}
\norm{\eta}^2_{H^{-1/2}(\partial\Omega)}&\leq Ch\norm{\eta}_1\norm{\beta}_2\leq Ch\norm{\eta}_1\norm{\delta}_{H^{1/2}(\partial\Omega)}\leq Ch\norm{\eta}_1\norm{\eta}_{H^{-1/2}(\partial\Omega)}.
	\end{align*}
	Hence
$\norm{\eta}_{H^{-1/2}(\partial\Omega)}\leq Ch^2\norm{w}_2.$\\
	Consider an auxiliary function $\phi$ satisfying the following problem
\begin{align}
-\Delta \phi+\lambda \phi&=0 \quad \text{in} \quad\Omega\label{feq3.4},\\\frac{\partial \phi}{\partial\nu}&=\eta_t \quad \text{on} \quad\partial\Omega,\notag
\end{align}
where $\norm{\phi}_2\leq C\norm{\eta_t}_{H^\frac{1}{2}(\partial\Omega)}$.  For a proof of this regularity result see \cite{lm68}.\\
Similarly we can show that
$\norm{\eta_t}_{H^{-1/2}(\partial\Omega)}\leq Ch^2\norm{w_t}_2.$

Using \eqref{1.30}, it follows  for $q \in [2,\infty)$ that
	\begin{align*}
	\norm{\eta}_{L^q(\partial \Omega)}\leq C\norm{\eta}_1\leq Ch\norm{w}_2,
	\end{align*} 
	and
	\begin{equation*}
	\norm{\eta_t}_{L^q(\partial \Omega)}\leq C\norm{\eta_t}_1\leq Ch\norm{w_t}_2.
	\end{equation*}
	This completes the proof.
\end{proof}
With $e:=w-w_h,$ decompose $e:=(w-\tilde w_h)- (w_h-\tilde w_h)=:\eta-\theta,$ where $\eta=w-\tilde w_h$ and $\theta=w_h-\tilde w_h$.\\
Since estimates of $\eta$ are known from \eqref{feq3.2} and Lemma \ref{flm3.6}, it is sufficient to estimate $\theta$.
Subtracting the weak formulation \eqref{feq1.10} from \eqref{feqn3.1} and a use of \eqref{feq3.1} yields
\begin{align}\label{feqn3.5}
(\theta_t,\chi)&+\nu(\nabla \theta, \nabla \chi)+\int_{\partial\Omega} \Big( 2 (c_0+w_d) \theta +\frac{2}{9c_0} \theta^3+ \frac{2}{3c_0} w_h^2 \theta\Big) \chi\,\;d\Gamma \notag \\
&=(\eta_t -\nu\lambda  \eta,\chi) +w_d\big(\nabla(\eta-\theta)\cdot{\bf{1}},\chi\big)+\int_{\partial\Omega} 2 (c_0+w_d)\;\eta\;\chi\;d\Gamma\notag\\
&\qquad
+\big((\eta-\theta)\nabla w\cdot{\bf{1}}+w_h(\nabla \eta-\nabla \theta)\cdot{\bf{1}},\chi\big) +\frac{2}{9c_0}\int_{\partial\Omega}\big(\eta^3 +3w\eta(w-\eta)+3 w_h \theta^2 \big) \chi \;d\Gamma\notag\\
&\qquad = \sum_{i=1}^{5}I_i(\chi).
\end{align}
Hence forward, we like to restrict the range of $\alpha$, that is,
$$  0 \leq \alpha \leq \frac{1}{C_F}\min\{ 3\nu/4, ((c_0/2)+ w_d)\}.$$
For such $\alpha$, the results of section 3 are also valid.

In the following theorem, we estimate $\norm{\theta(t)}$.
\begin{theorem}\label{fthm3.1}
	Let $w_0\in H^3(\Omega)$. Then, there exists a positive constant $C=C(\norm{w_0}_3)$ such that 
	there holds
	\begin{align*}
	\norm{\theta(t)}^2&+ \beta_1\;e^{-2\alpha t}\int_{0}^{t}e^{2\alpha s}\Big(\norm{ \nabla\theta(s)}^2+ \norm{\theta(s)}^2_{L^2(\partial\Omega)}
	+\norm{\theta(s)}^4_{L^4(\partial\Omega)}
	\Big)\;ds\\
	&\qquad\qquad\leq C\Big(\norm{w_0}_3\Big)\exp\Big(\norm{w_0}_2\Big)h^4e^{-2\alpha t},
	\end{align*}
	where $\beta_1= \min\Big(\big(\frac{3\nu}{2}-2\alpha C_F\big), \big((c_0+2w_d)- 2\alpha C_F\big), \frac{1}{27 c_0} \Big)>0$.
\end{theorem}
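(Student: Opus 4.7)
The plan is to test the error identity \eqref{feqn3.5} with $\chi=e^{2\alpha t}\theta$, mimicking the Lyapunov technique used in Section 3. After rewriting $(\theta_t,e^{2\alpha t}\theta)=\frac{1}{2}\frac{d}{dt}\norm{e^{\alpha t}\theta}^2-\alpha\norm{e^{\alpha t}\theta}^2$, the left-hand side produces
\[
\tfrac{1}{2}\tfrac{d}{dt}\norm{e^{\alpha t}\theta}^2+\nu\norm{e^{\alpha t}\nabla\theta}^2+e^{2\alpha t}\!\int_{\partial\Omega}\!\Big(2(c_0+w_d)\theta^2+\tfrac{2}{9c_0}\theta^4+\tfrac{2}{3c_0}w_h^2\theta^2\Big)d\Gamma
-2\alpha e^{2\alpha t}\norm{\theta}^2.
\]
The last term is absorbed by Friedrichs' inequality \eqref{feq1} at the cost of $2\alpha C_F$ in the gradient and boundary $L^2$ coefficients; splitting a reserve of $\nu/2$ from the $2\nu$ and a reserve of $(c_0/2)$ from $2(c_0+w_d)$ explains exactly the form of $\beta_1$ and leaves spare coercivity that will absorb critical right-hand-side contributions.

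The main work is to bound each $I_i(\theta)$ by $Ch^4e^{2\alpha t}\Psi(t)+Ce^{2\alpha t}\Phi(t)\norm{\theta}^2$ plus terms absorbable into the LHS reserve, where $\Psi$ aggregates $\norm{w}_2^2+\norm{w_t}_2^2+\cdots$ and $\Phi$ is integrable by Lemmas \ref{flm1}--\ref{flm5}. I would use: for $I_1$, Cauchy-Schwarz with \eqref{feq3.2}; for $I_2$, rewrite $w_d(\nabla\theta\cdot{\bf 1},\theta)=\tfrac{w_d}{2}\int_{\partial\Omega}\theta^2({\bf 1}\cdot n)\,d\Gamma$ to push the $\theta$-piece into the boundary reserve, while $w_d(\nabla\eta\cdot{\bf 1},\theta)\leq Ch^2\norm{w}_2\norm{\nabla\theta}$ plus Young; for $I_3$, the sharp rate comes from the $H^{-1/2}/H^{1/2}$ duality on $\partial\Omega$: $|\langle\eta,\theta\rangle_{\partial\Omega}|\leq C\norm{\eta}_{H^{-1/2}(\partial\Omega)}\norm{\theta}_{H^{1/2}(\partial\Omega)}\leq Ch^2\norm{w}_2\norm{\theta}_1$ by Lemma \ref{flm3.6} and the trace inequality \eqref{1.31}; for $I_4$, I use an $L^\infty$ bound on $w$ (Agmon + Lemmas \ref{flm1}, \ref{flm4}) for the $\nabla w$ factors and integrate by parts on $w_h(\nabla\theta\cdot{\bf 1})\theta=\tfrac{1}{2}w_h\nabla(\theta^2)\cdot{\bf 1}$, producing an interior term $\tfrac{1}{2}(\nabla w_h\cdot{\bf 1},\theta^2)$ controlled by the discrete $H^1$-stability of $w_h$ (Lemma \ref{flm3.1}-type bound) and a boundary cubic $\tfrac{1}{2}\langle w_h\theta^2,{\bf 1}\cdot n\rangle_{\partial\Omega}$ absorbed into the LHS $w_h^2\theta^2$ term by Young; for $I_5$, $\int_{\partial\Omega}\eta^3\theta$ is $O(h^3)$ per factor by Lemma \ref{flm3.6} and hence safely $O(h^4)$, the mixed piece $\int_{\partial\Omega}w\eta(w-\eta)\theta$ is estimated through Lemma \ref{x1} (to keep the product in $H^{1/2}$) together with the $L^q(\partial\Omega)$-bounds of $\eta$, and $\int_{\partial\Omega}w_h\theta^3$ is absorbed into the LHS quartic and $w_h^2\theta^2$ terms by Young.

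Collecting everything yields a differential inequality of the form
\[
\tfrac{d}{dt}\norm{e^{\alpha t}\theta}^2+\beta_1 e^{2\alpha t}\Big(\norm{\nabla\theta}^2+\norm{\theta}^2_{L^2(\partial\Omega)}+\norm{\theta}^4_{L^4(\partial\Omega)}\Big)\leq Ch^4 e^{2\alpha t}\Psi(t)+Ce^{2\alpha t}\Phi(t)\norm{\theta}^2.
\]
Since $\theta(0)=w_{0h}-\tilde w_h(0)$ is of size $O(h^2)$ in $L^2$, integrating from $0$ to $t$, applying Gronwall's inequality to dispose of the $\Phi\norm{\theta}^2$ term (its time integral is $O(\norm{w_0}_2)$ by Lemmas \ref{flm1}--\ref{flm4}), and finally multiplying through by $e^{-2\alpha t}$ delivers the stated exponential bound $C(\norm{w_0}_3)\exp(\norm{w_0}_2)h^4e^{-2\alpha t}$.

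The principal obstacles are twofold. First, the nonlinear boundary term arising from integrating $w_h(\nabla\theta\cdot{\bf 1})\theta$ by parts is cubic in $\theta$ and has no small prefactor, so its absorption must be engineered simultaneously against both the LHS $w_h^2\theta^2$ and the $\theta^4$ boundary terms via Young's inequality with carefully chosen exponents. Second, getting the optimal $h^4$ rate on $I_3$ precludes using the straightforward $\norm{\eta}_{L^2(\partial\Omega)}\leq Ch^{3/2}\norm{w}_2$ bound (which would only yield $h^3$); the negative-order estimate $\norm{\eta}_{H^{-1/2}(\partial\Omega)}\leq Ch^2\norm{w}_2$ from Lemma \ref{flm3.6} is essential, and its pairing with $\norm{\theta}_{H^{1/2}(\partial\Omega)}\leq C\norm{\theta}_1$ is what ties the optimality of the estimate to the availability of the full $\nabla\theta$ reserve on the LHS.
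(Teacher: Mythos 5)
Your overall strategy coincides with the paper's: test \eqref{feqn3.5} with (an exponentially weighted) $\theta$, absorb $-2\alpha\norm{\theta}^2$ via Friedrichs's inequality, reduce each $I_i(\theta)$ to $Ch^4\Psi+C\Phi\norm{\theta}^2$ plus pieces absorbable into the left-hand coercivity, and close with Gr\"onwall together with $\norm{\theta(0)}=O(h^2)$; your use of the $H^{-1/2}(\partial\Omega)$--$H^{1/2}(\partial\Omega)$ duality for $I_3$ and the absorption of $\int_{\partial\Omega}w_h\theta^3$ are exactly the paper's devices. Your handling of $-(w_h\nabla\theta\cdot{\bf 1},\theta)$ by writing it as $\tfrac12 w_h\nabla(\theta^2)\cdot{\bf 1}$ and integrating by parts, controlling the interior term through $\int_0^t\norm{\nabla w_h}^2ds\leq C$ from Lemma \ref{flm3.1} and absorbing the boundary cubic by Young, is a legitimate variant of the paper's splitting $w_h=\theta+\tilde w_h$ with $\norm{\tilde w_h}_{L^\infty}\leq C\norm{w}_2$; it only changes the bookkeeping of the constants entering $\beta_1$.

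There is, however, a genuine gap: the sub-term $\big(w_h(\nabla\eta\cdot{\bf 1}),\theta\big)$ of $I_4$ is never addressed, and it is precisely a term where a direct estimate ruins optimality, since $|(w_h\nabla\eta\cdot{\bf 1},\theta)|\leq C\norm{\nabla\eta}\norm{\theta}\leq Ch\norm{w}_2\norm{\theta}$ gives only $O(h^2)$ after Young, not the $O(h^4)$ your differential inequality needs. To keep the rate one must move the derivative off $\eta$: split $w_h=\theta+\tilde w_h$, integrate by parts in the $\tilde w_h$-part, and estimate the resulting boundary term $\sum_{i}\int_{\partial\Omega}\tilde w_h\,\eta\,\nu_i\,\theta\,d\Gamma$ by $\norm{\eta}_{H^{-1/2}(\partial\Omega)}\norm{\tilde w_h\theta}_{H^{1/2}(\partial\Omega)}\leq Ch^2\norm{w}_2^2\norm{\theta}_1$, i.e.\ the same negative-norm machinery you invoke for $I_3$ combined with the product estimate of Lemma \ref{x1}, while the interior pieces $(\theta\nabla\eta\cdot{\bf 1},\theta)$, $(\tilde w_h\eta,\nabla\theta\cdot{\bf 1})$ and $(\eta\nabla\tilde w_h\cdot{\bf 1},\theta)$ feed the Gr\"onwall coefficient; this is how the paper proceeds. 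The same issue appears in miniature in your $I_2$: the claimed bound $w_d(\nabla\eta\cdot{\bf 1},\theta)\leq Ch^2\norm{w}_2\norm{\nabla\theta}$ is not available directly (only $Ch\norm{w}_2\norm{\theta}$ is); it requires integration by parts, and the boundary term $w_d\sum_i\int_{\partial\Omega}\eta\,\nu_i\,\theta\,d\Gamma$ that you drop must again be controlled through $\norm{\eta}_{H^{-1/2}(\partial\Omega)}\leq Ch^2\norm{w}_2$ from Lemma \ref{flm3.6}. Also note Agmon's inequality bounds $\norm{w}_{L^\infty}$, not $\norm{\nabla w}_{L^\infty}$; for the $(\eta-\theta)\nabla w$ terms use $\norm{\nabla w}_{L^4}\leq C\norm{w}_2$ as in the paper. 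These repairs use tools you already deploy elsewhere, but as written the proposal does not yield the stated $h^4$ estimate.
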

\begin{proof}
	Set $\chi=\theta$ in \eqref{feqn3.5} to obtain
	\begin{align}\label{feq3.6}
	\frac{1}{2}\frac{d}{dt}\norm{\theta}^2+\nu\norm{\nabla\theta}^2 +2 (c_0+ w_d) \|\theta\|^2_{L^2(\Omega)} + \frac{2}{9c_0}  \|\theta\|^4_{L^4(\Omega)} 
	+ \frac{2}{3c_0} \int_{\partial\Omega} w_h^2\;\theta^2\;d\Gamma =\sum_{i=1}^{5}I_i(\theta).
	\end{align}
	The first term $I_1(\theta)$ on the right hand side of \eqref{feq3.6} is bounded by 
	$$I_1(\theta)=(\eta_t -\nu\lambda \eta,\theta)\leq C\big(\norm{\eta}^2+\norm{\eta_t}^2\big)+\frac{\epsilon}{16}\norm{\theta}^2,$$
	where $\epsilon>0$ is a positive number which we choose later.
	For the second term $I_2(\theta)$ on the right hand side of \eqref{feq3.6}, a use
	of the Cauchy-Schwarz inequality with Young's inequality and $\norm{\theta}_{H^{1/2}(\partial\Omega)}\leq C\norm{\theta}_1$ yields
	\begin{align*}
	I_2(\theta)&=w_d\Big(\nabla(\eta-\theta)\cdot{\bf{1}},\theta\Big)\\
	&=-w_d\big(\eta,\nabla\theta\cdot{\bf{1}}\big)+w_d\sum_{i=1}^{2}\int_{\partial\Omega}\eta \nu_i\theta d\Gamma-\frac{w_d}{2}\sum_{i=1}^{2}\int_{\partial\Omega}\theta^2 \nu_i d\Gamma\\
&\leq C\norm{\eta}\norm{\nabla\theta}+C\norm{\eta}_{H^{-1/2}(\partial\Omega)}\norm{\theta}_{H^{1/2}(\partial\Omega)}+w_d\norm{\theta}^2_{L^2(\partial\Omega)}\\
	&\leq \frac{\nu}{24}\norm{\nabla \theta}^2+\frac{\epsilon}{16}\norm{\theta}^2+C\norm{\eta}^2+w_d\norm{\theta}^2_{L^2(\partial\Omega)}+C\norm{\eta}^2_{H^{-1/2}(\partial\Omega)}.
	\end{align*}
	The third term $I_3(\theta)$ on the right hand side is bounded by
	\begin{align*}
	2(c_0+ w_d)\langle\eta,\theta\rangle_{(\partial\Omega)}&\leq C\;\norm{\eta}_{H^{-1/2}(\partial\Omega)}\;\norm{\theta}_{H^{1/2}(\partial\Omega)} \leq C \norm{\eta}_{H^{-1/2}(\partial\Omega)}\; (\norm{\nabla \theta} + \norm{\theta}) \\
&\hspace{1.5cm} \leq  \frac{\nu}{24}\norm{\nabla \theta}^2+\frac{\epsilon}{16}\norm{\theta}^2+C\norm{\eta}^2_{H^{-1/2}(\partial\Omega)}.
\end{align*}
	For the fourth term $I_4(\theta),$ first we use the Gagliardo-Nirenberg inequality  for $H^1$ function to  bound the following sub-terms as
	\begin{align*}
	\big((\eta-\theta)&\nabla w\cdot{\bf{1}},\theta\big) 
	\leq C\norm{\eta}\norm{\nabla w}_{L^4}\norm{\theta}_{L^4}+C\norm{\theta}\norm{\theta}_{L^4}\norm{\nabla w}_{L^4} 
	\\
	&\leq \frac{\nu}{48}\norm{\nabla \theta}^2+\frac{\epsilon}{16}\norm{\theta}^2+C\norm{\theta}^2  \norm{w}^2_2 
	+C\norm{\eta}^2\Big(1+\norm{w}^2+\norm{\Delta w}^2\Big)
	\end{align*}
	and  apply  $w_h= \theta + \tilde{w}_h$  with integration by parts  and $ \|\tilde{w}_h\|_{L^{\infty}} \leq C \|w\|_2$ to obtain a bound
\begin{align*}
	-(w_h\nabla \theta\cdot{\bf{1}},\theta)&= -( \theta \nabla \theta\cdot{\bf{1}},\theta)-(\tilde{w}_h\nabla \theta\cdot{\bf{1}},\theta)
	=-\frac{1}{3} \sum_{j=1}^2 \int_{\partial \Omega} \theta^3\; n_j \;d\Gamma-(\tilde{w}_h\nabla \theta\cdot{\bf{1}},\theta)\\
	&\leq \frac{\sqrt{2}}{3} \int_{\partial \Omega} |\theta|^3 \;d\Gamma 
	+ \|\tilde{w}_h\|_{L^{\infty}} \norm{\nabla \theta}\;\norm {\theta}\\
	&\leq \frac{3c_0}{2} \int_{\partial \Omega} |\theta|^2 \;d\Gamma +  \frac{1}{27 c_0} \int_{\partial \Omega} |\theta|^4 \;d\Gamma  +\frac{\nu}{48} \norm{\nabla \theta}^2
	+C\norm{\theta}^2  \norm{w}^2_2.
	\end{align*}
Using the Sobolev inequality $\norm{\theta}_{L^4}  \leq C \norm{\theta}_1$ and $\norm{\tilde{w}_h\theta}_{H^{1/2}(\partial\Omega)}\leq C \norm{w}_2 \; \norm{\theta}_1$, the other sub-term in $I_4(\theta)$ can be bounded by
	\begin{align*}
	\big(w_h\nabla\eta\cdot{\bf{1}},\theta\big)
	&=\big(\theta\nabla\eta\cdot{\bf{1}},\theta\big)-\big(\tilde{w}_h\nabla\theta\cdot{\bf{1}},\eta\big)-\big(\eta \nabla \tilde{w}_h\cdot{\bf{1}},\theta\big)+\sum_{i=1}^{2}\int_{\partial\Omega}\tilde{w}_h\eta\nu_i\theta\; d\Gamma\\
	&\leq \norm{\theta}\norm{\nabla \eta}_{L^4} \;\norm{\theta}_{L^4} + \norm{\eta}\norm{\nabla \theta}\norm{\tilde{w}_h}_{L^\infty}+ \norm{\eta}\norm{\nabla \tilde{w}_h}_{L^4}\norm{\theta}_{L^4}\\
	&\quad+\norm{\eta}_{H^{-1/2}(\partial\Omega)}\norm{\tilde{w}_h\theta}_{H^{1/2}(\partial\Omega)}\\
	&\leq \frac{\nu}{24}\norm{\nabla \theta}^2+\frac{\epsilon}{16}\norm{\theta}^2+C\; \big(\norm{w}^2_2\;\norm{\eta}^2  + \norm{\eta}^2_{H^{-1/2}(\partial\Omega)}\big)
	+C\norm{\theta}^2\big(\norm{w}^2+\norm{\nabla \eta}^2_{L^4}\big). 
	\end{align*}
	For $I_5(\theta),$ we note that
	\begin{align*}
	\frac{2}{9c_0}\int_{\partial\Omega}\eta^3\theta d\Gamma &\leq  C\; \norm{\eta}^3_{L^6(\partial\Omega)}\norm{\theta}_{L^2(\partial\Omega)}
	\leq  C \;\norm{\eta}^3_{L^6(\partial\Omega)}\;(\norm{\nabla \theta} + \norm{\theta})\\
	&\qquad \leq \frac{\nu}{48}\norm{\nabla \theta}^2 + \frac{\epsilon}{16} \norm{\theta}^2 + C \;\norm{\eta}^6_{L^6(\partial\Omega)},
	\end{align*}
	\begin{align*}
	\frac{2}{9c_0}\int_{\partial\Omega}3w^2\eta\theta d\Gamma&\leq
	C\norm{\eta}_{H^{-1/2}(\partial\Omega)}\norm{w^2\theta}_{H^{1/2}(\partial\Omega)}\\
	&\leq	C\norm{\eta}_{H^{-1/2}(\partial\Omega)}\norm{w}_2\norm{w\theta}_{H^{1/2}(\partial\Omega)}\\
	&\leq 	C\norm{\eta}_{H^{-1/2}(\partial\Omega)}\norm{w}^2_2\norm{\theta}_1\\
	&\leq \frac{\epsilon}{16}\norm{\theta}^2+\frac{\nu}{48}\norm{\nabla \theta}^2+C\norm{\eta}^2_{H^{-1/2}(\partial\Omega)}\norm{w}^4_2,
	\end{align*}
	\begin{align*}
	\frac{2}{9c_0}\int_{\partial\Omega}3w\eta^2\theta d\Gamma\leq \frac{\epsilon}{16}\norm{\theta}^2+\frac{\nu}{24}\norm{\nabla \theta}^2+C\norm{w}^2_{L^4(\partial\Omega)}\norm{\eta}^4_{L^4(\partial\Omega)},
	\end{align*}
	and
	\begin{align*}
	\frac{2}{9c_0}\int_{\partial\Omega}3w_h\theta^3 d\Gamma\leq\frac{2}{3c_0}\Bigg(\int_{\partial\Omega} w_h^2\theta^2 d\Gamma+\frac{1}{4}\int_{\partial\Omega}\theta^4 d\Gamma\bigg).
	\end{align*}
	Finally, using Lemmas \ref{flm1}-\ref{flm4}, \ref{flm3.1} and \ref{flm3.6}, we arrive from \eqref{feq3.6} at
	\begin{align}
	\frac{d}{dt}\norm{\theta}^2&+\frac{3}{2}\nu\norm{\nabla \theta}^2+(c_0+2w_d)\norm{\theta}^2_{L^2(\partial\Omega)}+\frac{1}{27c_0}\norm{\theta}^4_{L^4(\partial\Omega)}\notag\\
	&\leq \epsilon\norm{\theta}^2+C\norm{\eta}^2(1+\norm{w}^2+\norm{\Delta w}^2)+C\norm{\eta}^6_{L^6(\partial\Omega)}+C\norm{\theta}^2(1+h^2)\norm{w}^2_2\notag\\
	&\qquad+C\Big(\norm{\eta}^4_{L^4(\partial\Omega)}+\norm{\eta}^2_{H^{-1/2}(\partial\Omega)}\Big)\Big(1+\norm{w}^2_2+\norm{w}^4_{L^4(\partial\Omega)}\Big)\label{feq3.10}.
	\end{align}
	Multiply \eqref{feq3.10} by $e^{2\alpha t}$ and use Friedrichs's inequality
	$$-2\alpha e^{2\alpha t}\norm{\theta}^2\geq -2\alpha C_F e^{2\alpha t}\norm{\nabla\theta}^2-2\alpha C_Fe^{2\alpha t}\norm{\theta}^2_{L^2(\partial\Omega)}.$$ Then
	 a use of Lemmas \ref{flm1}, \ref{flm2},  and \ref{flm3.6} in \eqref{feq3.10} yields
	\begin{align*}
	\frac{d}{dt}&\big(\norm{e^{\alpha t}\theta}^2\big)+e^{2\alpha t}\Bigg(\Big(\frac{3\nu}{2}-2\alpha C_F\Big)\norm{ \nabla\theta}^2
	+\Big((c_0+2w_d)-2\alpha C_F\Big)\norm{ \theta}^2_{L^2(\partial\Omega)}+\frac{1}{27c_0}\norm{\theta}^4_{L^4(\partial\Omega)}
	\Bigg)\\
	&\leq Ce^{2\alpha t}\Big(\norm{\eta}^2+\norm{\eta_t}^2 +\norm{\eta}^2_{H^{-1/2}(\partial\Omega)}+C\norm{\eta}^4_{L^4(\partial\Omega)}+\norm{\eta}^6_{L^6(\partial\Omega)} \Big)
	+Ce^{2\alpha t}\norm{\theta}^2\;\big((1+h^2)\norm{w}^2_2\big) \\
	&\quad
	+\epsilon C_Fe^{2\alpha t}\Big(\norm{ \nabla\theta}^2+\norm{\theta}^2_{L^2(\partial\Omega)}\Big).
	\end{align*}
	Integrate the above inequality from $0$ to $t$ and choose $\epsilon=\frac{\beta_1}{2C_F}$. Then use the Gr\"onwall's inequality to obtain
	\begin{align*}
	&\norm{e^{\alpha t}\theta(t)}^2+ \beta_1\int_{0}^{t}e^{2\alpha s}\Big(\norm{ \nabla\theta(s)}^2+\norm{\theta(s)}^2_{L^2(\partial\Omega)}+\norm{\theta(s)}^4_{L^4(\partial\Omega)}\Big)\;ds\\
	&\quad\leq Ch^4\Big(\int_{0}^{t}\big(\norm{w(s)}^2_2+\norm{w_t(s)}^2_2\big)\;ds\Big)\exp\Bigg(\int_{0}^{t}\Big((1+h^2)\norm{w(s)}^2_2\Big)\;ds\Bigg).
	\end{align*}
	A use of Lemmas \ref{flm1}-\ref{flm5}, and \ref{flm3.1}  to the above inequality with a multiplication of $e^{-2\alpha t}$ completes  the proof.
\end{proof}
\begin{remark}
As a consequence of Theorem ~\ref{fthm3.1}, we use inverse property \eqref{inverse-property} to arrive for $p\in [2,\infty]$ at
\begin{align}\label{estimate-w-h-max}
\norm{w_h(t)}_{L^p(\Omega)} &\leq \norm{\tilde{w}_h(t)}_{L^p(\Omega)} + \norm{\theta(t)}_{L^p(\Omega)}\nonumber\\
&\leq  C\; \norm{w(t)}_2 + C\; h^{2(\frac{1}{p}-\frac{1}{2})}\; \norm{\theta(t)} \leq C e^{-\alpha t} \leq C.
\end{align}
\end{remark}
\begin{theorem}\label{fthm3.2}
	Let $w_0\in H^3(\Omega)$. Then, there is a positive constant $C$ independent of $h$ such that
	\begin{align*}
	\nu \norm{\nabla \theta(t)}^2+&2 (c_0+ w_d)\norm{\theta(t)}^2_{L^2(\partial\Omega)}+\frac{1}{9c_0}\norm{\theta(t)}^4_{L^4(\partial\Omega)}+e^{-2\alpha t}\int_{0}^{t}e^{2\alpha s}\norm{\theta_t(s)}^2\;ds\\
	&\leq C\Big(\norm{w_0}_3\Big)\exp\Big(C\norm{w_0}_2\Big)h^4e^{-2\alpha t}.
	\end{align*}
\end{theorem}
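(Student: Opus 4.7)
The plan is to imitate the strategy of Lemma~\ref{flm3} in the semidiscrete setting by testing the error equation \eqref{feqn3.5} with $\chi = e^{2\alpha t}\theta_t$. This choice turns the first two terms on the left into $\|e^{\alpha t}\theta_t\|^2$ and $\tfrac{\nu}{2}\tfrac{d}{dt}\|e^{\alpha t}\nabla\theta\|^2 - \alpha\nu\|e^{\alpha t}\nabla\theta\|^2$, while the two coercive boundary integrals produce
\[
(c_0+w_d)\,\tfrac{d}{dt}\|e^{\alpha t}\theta\|^2_{L^2(\partial\Omega)} + \tfrac{1}{18c_0}\,\tfrac{d}{dt}\|e^{\alpha t}\theta\|^4_{L^4(\partial\Omega)}
\]
plus exponential-weight correction terms in $-2\alpha$. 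These corrections will be absorbed later via Friedrichs' inequality, exactly as in the proof of Theorem~\ref{fthm3.1}, using the restricted range of $\alpha$.

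Next I would estimate each of the five right-hand side contributions $I_i(e^{2\alpha t}\theta_t)$ from \eqref{feqn3.5}. The linear pieces $I_1,I_2,I_3$ are controlled by Cauchy-Schwarz and Young's inequality, using Lemma~\ref{flm3.6} and the projection bounds \eqref{feq3.2} to absorb a small multiple of $\|e^{\alpha t}\theta_t\|^2$ and $\nu\|e^{\alpha t}\nabla\theta\|^2$ on the left and leave $\mathcal{O}(h^4)$ data. The nonlinear term $I_4$ requires the decomposition $w_h=\tilde w_h+(-\theta)$ and integration by parts for the factor $w_h\nabla\theta\cdot\mathbf{1}$, combined with $\|w_h\|_{L^\infty}\le C$ from the remark after Theorem~\ref{fthm3.1}, the boundedness $\|\tilde w_h\|_2\le C\|w\|_2$, and the Gagliardo-Nirenberg inequality; this gives bounds of the shape $\epsilon\|e^{\alpha t}\theta_t\|^2 + C(1+\|w\|_2^2)(\|e^{\alpha t}\nabla\theta\|^2 + \|e^{\alpha t}\theta\|^2) + Ch^4 e^{2\alpha t}$. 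The cubic boundary term $I_5$ is handled using $\|\eta\|_{L^q(\partial\Omega)}\le Ch\|w\|_2$, the trace inequality \eqref{1.30}, and Lemma~\ref{x1} to treat products like $w^2\theta$.

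The main obstacle, as in the continuous Lemma~\ref{flm3}, will be the extra cross term $\tfrac{2}{3c_0}\int_{\partial\Omega}e^{2\alpha t}w_h^2\theta\,\theta_t\,d\Gamma$ coming from the cubic mismatch $w^3-w_h^3$ that was split off onto the left in \eqref{feqn3.5}. I would rewrite it using the product rule as
\[
\int_{\partial\Omega}e^{2\alpha t}w_h^2\theta\theta_t\,d\Gamma
 = \tfrac{1}{2}\tfrac{d}{dt}\!\int_{\partial\Omega}e^{2\alpha t}w_h^2\theta^2\,d\Gamma
 - \alpha\!\int_{\partial\Omega}e^{2\alpha t}w_h^2\theta^2\,d\Gamma
 - \!\int_{\partial\Omega}e^{2\alpha t}w_h w_{h,t}\theta^2\,d\Gamma,
\]
move the pure derivative to the left (it is nonnegative), and absorb the remaining two terms using $\|w_h\|_{L^\infty}\le C$, the trace inequality, Theorem~\ref{fthm3.1} for $\|\theta\|^2_{L^2(\partial\Omega)}$, and the bound on $\|w_{h,t}\|_{L^\infty(\partial\Omega)}$ obtained by a standard inverse estimate combined with Lemma~\ref{flm3.1}.

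Finally, choosing the Young parameter $\epsilon$ small enough to absorb all $\epsilon\|e^{\alpha t}\theta_t\|^2$ and $\epsilon\|e^{\alpha t}\nabla\theta\|^2$ contributions, I would integrate the resulting differential inequality from $0$ to $t$, apply Gr\"onwall's inequality, insert $\|\theta(0)\|=0$ (since $w_h(0)=P_h u_0 - w_d$), and then invoke Lemmas~\ref{flm1}--\ref{flm5}, \ref{flm3.1}, \ref{flm3.6}, Theorem~\ref{fthm3.1} together with the bound $\int_0^\infty e^{2\alpha s}\big(\|w(s)\|_2^2+\|w_t(s)\|_2^2\big)\,ds\le C$ to control the exponential growth factor by a constant depending only on $\exp(C\|w_0\|_2)$. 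Multiplying through by $e^{-2\alpha t}$ then yields the stated exponential decay with the optimal $h^4$ rate.
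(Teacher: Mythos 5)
Your starting point (testing \eqref{feqn3.5} with $e^{2\alpha t}\theta_t$ and moving the coercive boundary terms to the left as time derivatives) coincides with the paper, but the core of your plan for the right-hand side has a genuine gap: you propose to bound $I_2$, $I_3$ and $I_5$ (and the sub-term $(w_h\nabla\eta\cdot{\bf 1},\theta_t)$ of $I_4$) directly by Cauchy--Schwarz and Young, absorbing $\epsilon\norm{\theta_t}^2$ and leaving $O(h^4)$ data. This cannot work. First, the terms carrying $\nabla\eta$ against $\theta_t$ only see $\norm{\nabla\eta}\leq Ch\norm{w}_2$, so after Young they leave $O(h^2)$, not $O(h^4)$. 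Second, $I_3(\theta_t)=2(c_0+w_d)\langle\eta,\theta_t\rangle_{\partial\Omega}$ and the boundary terms in $I_5(\theta_t)$ involve $\theta_t$ on $\partial\Omega$, and the left-hand side of this estimate contains no $\norm{\nabla\theta_t}$ with which to control its trace; combining the trace inequality with the inverse estimate $\norm{\theta_t}_1\leq Ch^{-1}\norm{\theta_t}$ gives at best $\norm{\eta}_{H^{-1/2}(\partial\Omega)}\norm{\theta_t}_{H^{1/2}(\partial\Omega)}\leq Ch^2\cdot h^{-1}\norm{w}_2\norm{\theta_t}$, i.e.\ again only $O(h^2)$ after Young. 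The missing idea is precisely the device you invoke only for the single term $\tfrac{2}{3c_0}\int_{\partial\Omega}w_h^2\theta\theta_t\,d\Gamma$: it must be applied systematically. Each offending pairing with $\theta_t$ has to be integrated by parts in time, writing it as $\tfrac{d}{dt}$ of a product of $\eta$ (or of boundary products such as $\eta\nu_i\theta$, $w^2\eta\theta$, $\eta^3\theta$, $\eta\tilde w_h\nu_i\theta$) with $\theta$, minus terms in which the time derivative falls on $\eta$, $w$ or $\tilde w_h$ — all of which enjoy optimal-order bounds ($\norm{\eta_t}\leq Ch^2\norm{w_t}_2$, $\norm{\eta_t}_{H^{-1/2}(\partial\Omega)}\leq Ch^2\norm{w_t}_2$, etc.). These total derivatives are collected (the paper's functional $F(\eta,w_h)(\theta)$) and removed after integration in time by a kickback argument. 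Without this step your differential inequality yields only an $O(h^2)$ bound for the squared quantities, not the claimed $h^4$.

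Two smaller points. Your treatment of the residual boundary term relies on a pointwise bound for $w_{ht}$ on $\partial\Omega$ obtained from ``a standard inverse estimate combined with Lemma \ref{flm3.1}''; that lemma controls neither $w_{ht}$ nor any $L^\infty$ norm, and an inverse estimate alone would cost negative powers of $h$. The paper instead splits $w_{ht}=\theta_t+\tilde w_{ht}$, uses $\norm{\tilde w_{ht}}_{L^\infty}\leq C\norm{w_t}_2$, $\norm{\theta}_{L^\infty}\leq Che^{-\alpha t}$ and a discrete trace/inverse estimate, and absorbs a small multiple of $\norm{\theta_t}^2$. Also, $\theta(0)$ need not vanish, since $w_h(0)$ is built from the projection $P_h$ while $\tilde w_h(0)$ is the auxiliary elliptic projection; one only gets $\norm{\theta(0)}_1=O(h)$-type smallness from \eqref{feq3.2} and the choice of $w_{0h}$, which must be tracked rather than set to zero.
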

\begin{proof}
	Set $\chi=\theta_t$ in \eqref{feqn3.5} to obtain
	\begin{align}
	\norm{\theta_t}^2+\frac{1}{2}\frac{d}{dt}\Big(\nu\norm{\nabla\theta}^2 &+ 2(c_0+w_d) \norm{\theta}^2_{L^2(\partial \Omega)} + \frac{1}{9c_0} \norm{\theta}^4_{L^4(\partial\Omega)} + \frac{2}{3c_0} \norm{w_h \theta}^2_{L^2(\partial \Omega)}\Big)\nonumber\\
	&\quad \quad=\sum_{i=1}^{4}I_i(\theta_t) + \Big( I_5(\theta_t) +\frac{2}{3c_0} \big \langle w_h\;w_{ht}\; \theta,\theta \big \rangle_{\partial\Omega}\Big)\label{feq5.1}.
	\end{align}
	The first term $I_1(\theta_t)$ on the right hand side of \eqref{feqn3.5} is bounded by
	\begin{equation*}
	I_1(\theta_t)=(\eta_t-\lambda\nu \eta,\theta_t) \leq \frac{1}{6}\norm{\theta_t}^2+C\Big(\norm{\eta}^2+\norm{\eta_t}^2\Big).
	\end{equation*}
	The second term $I_2(\theta_t)$ on the right hand side of \eqref{feqn3.5} can be rewritten as 
	\begin{align*}
	I_2(\theta_t)&=w_d\Big((\nabla \eta-\nabla\theta)\cdot {\bf{1}},\theta_t\Big)\\
	&=-w_d\frac{d}{dt}\Big(\eta,\nabla\theta\cdot {\bf{1}}\Big)+w_d(\eta_t,\nabla\theta\cdot {\bf{1}})+w_d\frac{d}{dt}\Big(\sum_{i=1}^{2}\int_{\partial\Omega}\eta\nu_i\theta \;d\Gamma\Big)\\
	&\quad -w_d\sum_{i=1}^{2}\int_{\partial\Omega}\eta_t\nu_i\theta\;d\Gamma-w_d(\nabla\theta\cdot{\bf{1}},\theta_t),
	\end{align*}
	and hence, we get
	\begin{align*}
	I_2(\theta_t)&=w_d\Big((\nabla \eta-\nabla\theta)\cdot {\bf{1}},\theta_t\Big)\\
	&\leq -w_d\frac{d}{dt}\Big(\eta,\nabla\theta\cdot {\bf{1}}\Big)+
	w_d\frac{d}{dt}\Big(\sum_{i=1}^{2}\int_{\partial\Omega}\eta\nu_i\theta d\Gamma\Big)\\
	&\quad+ \frac{1}{12}\norm{\theta_t}^2+C\Big(\norm{\theta}^2+\norm{\nabla\theta}^2\Big)+C\Big(\norm{\eta_t}^2+\norm{\eta_t}^2_{H^{-1/2}(\partial\Omega)}\Big).
	\end{align*}
	The third term $I_3(\theta_t)$ on the right hand side of \eqref{feqn3.5} is bounded by
	\begin{align*}
	I_3(\theta_t)&=\Big \langle 2(c_0+w_d)\eta,\theta_t\Big \rangle_{\partial\Omega} = \frac{d}{dt}\Big\langle 2(c_0+w_d)\eta,\theta\Big\rangle- 2(c_0+w_d)\langle\eta_t,\theta\rangle\\
	&\leq  
	\frac{d}{dt}\Big\langle 2(c_0+ w_d)\eta,\theta\Big\rangle
	+C\Big(\norm{\theta}^2+\norm{\nabla\theta}^2\Big)+C\norm{\eta_t}^2_{H^{-1/2}(\partial\Omega)}.
	\end{align*}
	For the fourth term $I_4(\theta_t)$ on the right hand side of \eqref{feqn3.5}, first we rewrite the sub terms as
	\begin{align*}
	(\eta\nabla w\cdot{\bf{1}},\theta_t)=\frac{d}{dt}\Big((\eta\nabla w\cdot {\bf{1}},\theta)\Big)-(\eta_t\nabla w\cdot {\bf{1}},\theta)-\Big(\eta(\nabla w\cdot {\bf{1}})_t,\theta\Big),
	\end{align*}
	and bound it using the Gagliardo-Nirenberg inequality for $H^2$ function
	\begin{align*}
	(\eta\nabla w\cdot{\bf{1}},\theta_t)&\leq\frac{d}{dt}\Big((\eta\nabla w\cdot {\bf{1}},\theta)\Big)+ C\norm{\eta_t}^2\Big(1+\norm{w}^2_2\Big)+C\norm{\theta}^2\Big(1+\norm{\Delta w}^2+\norm{\Delta w_t}^2\Big)\\
	&\qquad+C\norm{\nabla \theta}^2\Big(\norm{w}^2+\norm{w_t}^2\Big)+C\norm{\eta}^2\norm{\Delta w_t}^2.
	\end{align*}
	For the other sub-term of $I_4(\theta_t)$, apply $w_h= \theta + \tilde{w}_h$ with and then  use integration by parts    to obtain
	\begin{align*}
	(w_h\nabla \eta\cdot{\bf{1}},\theta_t)&=(\theta\nabla \eta\cdot{\bf{1}},\theta_t)
	-\frac{d}{dt} \Big((\eta\nabla\theta\cdot {\bf{1}}, \tilde{w}_h)+\big(\eta(\nabla \tilde{w}_h \cdot {\bf{1}}),\theta\big)-\sum_{i=1}^{2}\int_{\partial\Omega}\eta \tilde{w}_h\nu_i\theta\; d\Gamma\Big)\\
	&\qquad +\Big((\eta \tilde{w}_h)_t,\nabla\theta\cdot{\bf{1}}\Big)+\Big((\eta\nabla \tilde{w}_h\cdot{\bf{1}})_t,\theta\Big)-\sum_{i=1}^{2}\int_{\partial\Omega}(\eta \tilde{w}_h)_t\nu_i\theta \;d\Gamma,
	\end{align*}
and hence, from $\norm{\theta}_{L^4(\Omega)} \leq C (\norm{\nabla \theta} + \norm{\theta})$ and $\norm{\tilde{w}_h}_{L^{\infty}}\leq C (1+h)\;  \norm{w}_2,$  (see, Ciarlet \cite[page 168]{ciarlet}), it follows that
	\begin{align*}
	\Big(w_h\nabla\eta \cdot{\bf{1}},\theta_t\Big)&\leq -\frac{d}{dt}\Big((\eta\nabla\theta\cdot {\bf{1}}, \tilde{w}_h)+\big(\eta(\nabla \tilde{w}_h \cdot {\bf{1}}),\theta\big)-\sum_{i=1}^{2}\int_{\partial\Omega}\eta \tilde{w}_h\nu_i\theta\; d\Gamma\Big)\\
	&\quad+ C\norm{\eta_t}^2\Big(1+\norm{w}^2_2\Big)+C\norm{\eta}^2\Big(1+\norm{w_{t}}^2_2\Big)\\
	&\quad+C \big(\norm{\theta}^2 +\norm{\nabla\theta}^2\big)\; \Big(\norm{\nabla \eta}_{L^4}^2+\norm{\nabla \eta_t}^2_{L^4}+\norm{w}^2_2+\norm{w_{t}}^2_2\Big)\\
	&\quad+C\Big(\norm{\eta}^2_{H^{-1/2}(\partial\Omega)}+\norm{\eta_t}^2_{H^{-1/2}(\partial\Omega)}\Big) + \frac{1}{12} \norm{\theta_t}^2.
	\end{align*}
	For the  remaining other two sub-terms  of $I_4(\theta_t),$   a use of $\norm{\theta(t)}_{L^{\infty}} \leq C\;h  e^{-\alpha t}\leq C$  now leads to
	\begin{align*}
	-\Big(\theta(\nabla w\cdot{\bf{1}}),\theta_t\Big)&-\Big(w_h(\nabla\theta\cdot{\bf{1}}),\theta_t\Big)= -\Big(\theta(\nabla w\cdot{\bf{1}}),\theta_t\Big)-\Big( (\theta + \tilde{w}_h)\;\nabla\theta\cdot{\bf{1}},\theta_t\Big)\\
	&\leq \Big(\norm{\nabla w}_{L^4}\; \norm{\theta}_{L^4} + (\norm{\theta}_{L^{\infty} }+ \norm{\tilde{w}_h}_{L^{\infty}}) \norm{\nabla \theta}\Big)\;\norm{\theta_t}\\
	&\leq \frac{1}{12}\norm{\theta_t}^2+C\;\norm{\theta}^2 \;\norm{w}_2^2+ \norm{\nabla \theta}^2\;\big(\norm{w}^2_2+ 1\big).
	\end{align*}
	For the last term 
	on the right hand side of \eqref{feqn3.5}, the first sub-term is bounded by
	\begin{align*}
	\frac{2}{9c_0}\int_{\partial\Omega}\eta^3\theta_t \;d\Gamma&=\frac{2}{9c_0}\frac{d}{dt}\Big(\int_{\partial\Omega}\eta^3\theta \;d\Gamma\Big)-\frac{2}{3c_0}\int_{\partial\Omega}\eta^2\eta_t\theta\;d\Gamma\\
	&\leq \frac{2}{9c_0}\frac{d}{dt}\Big(\int_{\partial\Omega}\eta^3\theta \;d\Gamma\Big)+C\Big(\norm{\eta}^4_{L^4(\partial\Omega)}+\norm{\eta_t}^2_{L^4(\partial\Omega)}\norm{\theta}^2_1\Big).
	\end{align*}
	Similarly, the other sub-terms are bounded by
	\begin{align*}
	\frac{2}{9c_0}\int_{\partial\Omega}3w^2\eta\theta_t\;d\Gamma&\leq\frac{2}{3c_0}\frac{d}{dt}\Big(\int_{\partial\Omega}w^2\eta \theta\;d\Gamma\Big)+C\norm{\eta_t}_{H^{-1/2}(\partial\Omega)}\norm{w}^2_2\norm{\theta}_1\\
	&+C\norm{\eta}_{H^{-1/2}(\partial\Omega)}\norm{w}_2\norm{w_t}_2\norm{\theta}_1\\&\leq \frac{2}{3c_0}\frac{d}{dt}\Big(\int_{\partial\Omega}w^2\eta \theta\;d\Gamma\Big)+C\norm{w}^2_2\Big(\norm{\theta}^2+\norm{\nabla\theta}^2\Big)\\
	&\quad+C\norm{\eta_t}^2_{H^{-1/2}(\partial\Omega)}+C\norm{\eta}^2_{H^{-1/2}(\partial\Omega)}\norm{w_t}^2_2,
	\end{align*}
	\begin{align*}
	-\frac{2}{9c_0}3\int_{\partial\Omega}w\eta^2\theta_t\;d\Gamma&\leq-\frac{2}{3c_0}\frac{d}{dt}\Big(\int_{\partial\Omega}w\eta^2\theta\;d\Gamma\Big)+C\norm{\eta}^4_{L^4(\partial\Omega)}\\
	&\qquad+C\norm{\eta}^2_{L^4(\partial\Omega)}\norm{\eta_t}^2_{L^4(\partial\Omega)}+C\norm{\theta}^2_1\Big(\norm{w}^2_1+\norm{w_t}^2_1\Big).
	\end{align*}
Trace inequality for fractional Sobolev norm  yields
$$\norm{v_h}_{L^\infty(\partial K)} \leq C( \norm{v_h}_{L^{\infty}(K)} + h_{K}^{\delta} \norm{v_h}_{W^{\delta,\infty}(K)}).$$
Using inverse inequality $\norm{v_h}_{W^{\delta,\infty}(K)} \leq C h^{-\delta} \norm{v_h}_{L^{\infty}(K)}$,
we obtain $\norm{v_h}_{L^\infty(\partial K)} \leq C \norm{v_h}_{L^{\infty}(K)}$.	
		
A use of $\|\theta(t)\|_{L^{\infty}} \leq C \;h e^{-\alpha t}$ with $w_{ht}= \theta_t + \tilde{w}_{ht}$, $\norm{v_h}_{L^\infty(\partial K\cap \partial \Omega)}\leq \norm{v_h}_{L^\infty(\Omega)}$, 
and inverse inequality $\|\theta\|_1\leq C h^{-1} \|\theta\|$ yields
	\begin{align*}
	\frac{2}{3c_0} \int_{\partial\Omega} \Big( w_h \theta^2 \;\theta_t &+ w_h\;w_{ht}\;\theta^2\Big)\;d\Gamma = \frac{2}{3c_0} \int_{\partial\Omega} \Big( 2 w_h \theta^2 \;\theta_t + w_h\;\tilde{w}_{ht}\;\theta^2\Big)\;d\Gamma\\
	&\leq C\;\|w_h\|_{L^{\infty}}\Big(\norm{\theta}_{L^{\infty}} \;\norm{\theta}_{L^2(\partial\Omega)}\;\norm{\theta_t}_{L^2(\partial\Omega)} + \big(\norm{w_h}^2_{L^{\infty}} +\norm{\tilde{w}_{ht}}^2_{L^{\infty}}\big)\;\norm{\theta}^2_{L^2(\partial\Omega)}\Big)\\
	& \leq C\;\Big( h\;\norm{w}_2\;\norm{\theta}_{L^2(\partial\Omega)}\;\norm{\theta_t}_1 + \big(\norm{w_t}_2^2+\norm{w}_2^2\big)\; \norm{\theta}^2_{L^2(\partial\Omega)}\Big)\\
	& \leq C\; \big(\norm{w_t}_2^2+\norm{w}_2^2\big)\;\norm{\theta}^2_{L^2(\partial\Omega)} + \frac{1}{12} \norm{\theta_t}^2.
	\end{align*}
	Substitute all the above estimates in \eqref{feq5.1}, apply kickback argument   and then multiply the resulting ones by $e^{2\alpha t}$  with using Lemmas \ref{flm1}, \ref{flm4}, \ref{flm3.1}  and \ref{x1}  to obtain after setting 
	$$\||\theta(t)\|| := \Big(\nu\norm{\nabla\theta}^2 + 2(c_0+w_d) \norm{\theta}^2_{L^2(\partial \Omega)} + \frac{1}{9c_0} \norm{\theta}^4_{L^4(\partial\Omega)} + \frac{2}{3c_0} \norm{w_h \theta}^2_{L^2(\partial \Omega)}\Big)^{1/2}
	$$
	and
	\begin{align*}
		F(\eta,w_h)(\theta) &:= \Bigg(\big(\eta (\nabla w_h\cdot{\bf{1}}),\theta \big)
		+\big(\eta (\nabla \theta\cdot{\bf{1}}),w_h \big)
		-\big(\eta (\nabla w\cdot{\bf{1}}),\theta \big)\\
		&\qquad-\sum_{i=1}^{2}\int_{\partial\Omega}\eta w_h\nu_i\theta\;d\Gamma -(c_0+2w_d)<\eta,\theta>_{\partial\Omega}\Bigg)
	\end{align*}
	as
	\begin{align*}
	\frac{d}{dt}\Bigg(e^{2\alpha t} \||\theta(t) \||^2 \Bigg)
	&+e^{2\alpha t}\norm{\theta_t}^2  
	\leq  2\alpha e^{2\alpha t}  \||\theta(t)\||^2 -\frac{d}{dt} \Big(e^{2\alpha t} \;F(\eta,w_h)(\theta)\Big) + 2\alpha e^{2\alpha t} \;F(\eta,w_h)(\theta)\\
	&+C\;e^{2\alpha t} \Big(\norm{\eta}^2+\norm{\eta_t}^2 +\norm{\eta}^2_{L^{4}(\partial\Omega)} + \norm{\eta}^2_{H^{-1/2}(\partial\Omega)}
	+ \norm{\eta_t}^2_{H^{-1/2}(\partial\Omega)}\Big)\\
	&\quad+Ce^{2\alpha t}\norm{\theta}^2 \Big(1+ \norm{w}^2_2+\norm{w_t}^2_2 +\norm{\eta_t}^2_{L^{4}(\partial\Omega)} \Big)
	+ C\;e^{2\alpha t}\norm{\theta}^4_{L^4(\partial\Omega)}\;\norm{w_{t}}_2^2 \\ 
	&\quad + C\;e^{2\alpha t} \norm{\nabla \theta}^2 \Big( \norm{w}_2^2+ \norm{w_t}_2^2 +  \norm{\eta_t}^2_{L^4(\partial\Omega)}\Big)+Ce^{2\alpha t}\norm{\nabla\theta}^2.
	\end{align*}
	Integrate the above inequality from $0$ to $t$.  Note that using  the bound $\norm{w_h}_{L^{\infty}} \leq C \;\norm{w}_2$, we obtain 
	$$2\alpha \int_{0}^{t} e^{2\alpha s}  \||\theta(s)\||^2\;ds \leq C\,\int_{0}^{t} e^{2\alpha s}  \Big(\norm{\nabla\theta}^2 + \norm{\theta}^2_{L^2(\partial\Omega)} 
	+ \norm{\theta}^4_{L^4(\partial\Omega)}\Big)\;ds$$
	and using the splitting $w_h=\theta + \tilde{w}_h$  for  first term of $F$, we  bound  $F$ as 
	\begin{align*}
	F(\eta,w_h)(\theta) & \leq C \;\Big(\norm{\eta}_{L^4(\Omega)}\;\norm{\theta}+\norm{\eta}\;\norm{w}_2 + \norm{w}_2\;\norm{\eta}_{H^{-1/2}(\partial\Omega)}\Big)
	\;(\norm{\theta} +\norm{\nabla\theta})\\
	&\leq  \frac{\nu}{2}\norm{\nabla \theta(t)}^2 + C \Big( (1+\norm{w}_2^2 )\;\norm{\theta}^2 +\norm{\eta}^2\;\norm{w}_2^2 + \norm{w}_2^2\;\norm{\eta}_{H^{-1/2}(\partial\Omega)}\Big).
	\end{align*}
	 Then 
	 use kickback argument  and    apply estimate \eqref{feq3.2}, Lemmas \ref{flm2}, \ref{flm4}, \ref{flm5}, \ref{flm3.6} and Theorem \ref{fthm3.1} to arrive at
	\begin{align*}
	e^{2\alpha t} \Big(\frac{\nu}{2} \norm{\nabla\theta(t)}^2&+ 2(c_0+ w_d)\norm{\theta(t)}^2_{L^2(\partial\Omega)}+\frac{1}{9c_0}\norm{\theta(t)}^2_{L^4(\partial\Omega)}+\frac{2}{3c_0}\int_{\partial\Omega}w_h(t)^2\theta(t)^2\;d\Gamma\Big)\\
	&+ \int_{0}^{t}e^{2\alpha s}\norm{\theta_t(s)}^2\;ds
	\leq Ch^4 \;\Big(\norm{w_0}_3\Big)\exp\Big(C\norm{w_0}_2\Big) \\
	&\qquad+ C \int_{0}^{t} e^{2\alpha s} \;\norm{w_t}^2_2 \;e^{2\alpha s} \norm{\nabla \theta(t)}^2\;ds.
	\end{align*}
	Now a use of Gronwall's Lemma with a multiplication of $e^{-2\alpha t}$ completes the rest of the proof.
\end{proof}
As a consequence  of Theorem  \ref{fthm3.2}, we obtain a super convergence result 
$\norm{\nabla (w_h(t)-\tilde w_h(t))}$. This, in turn, provides an optimal 
  order of convergence result for feedback control law. Finally, the main theorem of this section, which provides optimal error estimates (optimality with respect to approximation property) in the state variable  as well as  feedback control law is given below.
\begin{theorem}\label{fthm3.3}
There is a positive constant $C=C(\norm{w_0}_3)$ independent of $h$ such that
	\begin{align*}
	\norm{w-w_h}_{L^\infty(H^i)}\leq Ch^{2-i}e^{-\alpha t}\exp\Big(\norm{w_0}_2\Big),\quad i=0,\hspace{0.1cm}1
	\end{align*}
	and
	\begin{align*}
	\norm{v_{2t}-v_{2ht}}_{L^\infty(L^2(\partial\Omega))}\leq Ch^{3/2}e^{-\alpha t}\exp\Big(\norm{w_0}_2\Big).
	\end{align*}
\end{theorem}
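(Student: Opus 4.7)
For the state error estimates ($i=0,1$), I would use the standard splitting $w-w_h = \eta-\theta$ with $\eta = w-\tilde w_h$ and $\theta = w_h-\tilde w_h$. The projection estimate \eqref{feq3.2} gives $\norm{\eta(t)}_i \leq Ch^{2-i}\norm{w(t)}_2$, while Theorems \ref{fthm3.1} and \ref{fthm3.2} respectively yield $\norm{\theta(t)} \leq Ch^2 e^{-\alpha t}$ and the super-convergent $\norm{\nabla\theta(t)} \leq Ch^2 e^{-\alpha t}$. Since $\norm{w(t)}_2 \leq C e^{-\alpha t}$ follows from Lemma \ref{flm4}, the triangle inequality immediately produces the claimed $L^\infty(H^i)$ bounds.

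For the control estimate, differentiating \eqref{feqx1} in $t$ for both $w$ and $w_h$ gives, on $\partial\Omega$,
\begin{equation*}
v_{2t}-v_{2ht} = -\tfrac{2}{\nu}\Bigl[(c_0+w_d)(w_t-w_{ht}) + \tfrac{1}{3c_0}\bigl(w^2(w_t-w_{ht}) + (w+w_h)(w-w_h)\,w_{ht}\bigr)\Bigr].
\end{equation*}
Using $\norm{w}_{L^\infty(\Omega)} \leq C\norm{w}_2$ via Agmon, $\norm{w_h}_{L^\infty(\Omega)} \leq C$ from the remark after Theorem \ref{fthm3.1}, and a pointwise control on $w_{ht}$ on $\partial\Omega$ (obtained by combining a trace inequality with an inverse estimate on $V_h$), the task reduces to bounding the two boundary quantities $\norm{(w-w_h)(t)}_{L^2(\partial\Omega)}$ and $\norm{(w_t-w_{ht})(t)}_{L^2(\partial\Omega)}$ by $Ch^{3/2}e^{-\alpha t}$. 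The first one is immediate from $w-w_h=\eta-\theta$: Lemma \ref{flm3.6} gives $\norm{\eta}_{L^2(\partial\Omega)} \leq Ch^{3/2}\norm{w}_2$, and Theorem \ref{fthm3.2} supplies $\norm{\theta}_{L^2(\partial\Omega)} \leq Ch^2 e^{-\alpha t}$.

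For $\norm{w_t-w_{ht}}_{L^2(\partial\Omega)} \leq \norm{\eta_t}_{L^2(\partial\Omega)}+\norm{\theta_t}_{L^2(\partial\Omega)}$, the projection part is handled by an Aubin--Nitsche duality mimicking the proof of Lemma \ref{flm3.6} applied to $w_t$ (or, equivalently, by interpolating the $H^{-1/2}$ and $H^{1/2}$ bounds on $\eta_t$ from Lemma \ref{flm3.6}), yielding $\norm{\eta_t}_{L^2(\partial\Omega)} \leq Ch^{3/2}e^{-\alpha t}$, using the $w_t$ regularity from Lemma \ref{flm5} and \eqref{ex}. The main obstacle is a pointwise-in-time bound on $\norm{\theta_t}_{L^2(\partial\Omega)}$, since Theorem \ref{fthm3.2} only supplies the integrated estimate $\int_0^t e^{2\alpha s}\norm{\theta_t(s)}^2\,ds \leq Ch^4$. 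To upgrade this, I would differentiate the error equation \eqref{feqn3.5} in $t$, test the resulting identity with $e^{2\alpha t}\theta_t$, and, by a careful Gronwall argument analogous to Theorem \ref{fthm3.2}, derive pointwise bounds on $\norm{\theta_t(t)}$ and $\norm{\nabla\theta_t(t)}$ of order $h^2 e^{-\alpha t}$ and $h e^{-\alpha t}$ respectively. The initial datum $\norm{\theta_t(0)}$ is then controlled through the compatibility condition \eqref{comp} combined with Lemma \ref{flm5}. The interpolation trace inequality $\norm{v}_{L^2(\partial\Omega)} \leq C \norm{v}^{1/2}\norm{v}_1^{1/2}$ applied to $v=\theta_t$ then yields $\norm{\theta_t}_{L^2(\partial\Omega)} \leq Ch^{3/2}e^{-\alpha t}$, which is exactly the rate lost to the $L^2$-boundary trace.

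The delicate issues in this last step are the time-differentiated cubic boundary term: differentiating $\frac{2}{3c_0}\langle w_h^2\theta,\chi\rangle_{\partial\Omega}$ produces a factor $w_h w_{ht}\theta$ whose boundary $L^\infty$-size has to be absorbed by inverse estimates on $V_h$, together with the $L^\infty$-bound $\norm{w_h}_{L^\infty(\Omega)} \leq C$ from the remark after Theorem \ref{fthm3.1}. Preserving the exponential weight $e^{2\alpha t}$ throughout requires the same Friedrichs-plus-kickback bookkeeping as in Theorems \ref{fthm3.1}--\ref{fthm3.2}, and the admissible decay rate $\alpha$ is constrained as in \eqref{decay}, which is why the stated bound exhibits the $e^{-\alpha t}$ factor in the same range as the continuous-level stabilization lemmas.
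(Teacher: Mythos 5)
Your treatment of the state-variable estimates is correct and is exactly the paper's argument: split $w-w_h=\eta-\theta$, invoke \eqref{feq3.2} for $\eta$, Theorems \ref{fthm3.1} and \ref{fthm3.2} for $\theta$ and $\nabla\theta$, and conclude by the triangle inequality.

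For the control estimate, however, you have read $v_{2t}$ as the time derivative $\partial_t v_2$, whereas in the paper it denotes $v_2(t)$, the control at time $t$. The paper's own proof rests on the identity
\[
v_{2t}-v_{2ht}=-\frac{1}{\nu}\Big(2(c_0+w_d)(\eta-\theta)+\frac{2}{9c_0}(\eta-\theta)\big(w^2+ww_h+w_h^2\big)\Big),
\]
which contains no time derivatives and is simply $v_2-v_{2h}$ computed from \eqref{feqx1} using $w^3-w_h^3=(w-w_h)(w^2+ww_h+w_h^2)$; the numerical section likewise reports convergence of $|v_2(t_n)-v_{2h}(t_n)|$ against the theoretical rate $3/2$. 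Under that reading, the estimate follows in one step from the bound you already derived as an intermediate: $\norm{(w-w_h)(t)}_{L^2(\partial\Omega)}\le Ch^{3/2}e^{-\alpha t}$ via $\norm{\eta}_{L^2(\partial\Omega)}\le Ch^{3/2}\norm{w}_2$ (Lemma \ref{flm3.6}) and $\norm{\theta}_{L^2(\partial\Omega)}\le Ch^{2}e^{-\alpha t}$ (Theorem \ref{fthm3.2}), combined with $L^4(\partial\Omega)$ boundedness of $w$, $\tilde w_h$ and $\theta$. Everything in your argument after that point is unnecessary for the stated theorem, and it is also where a genuine gap would sit if the time-differentiated estimate were the goal: a pointwise-in-time bound on $\norm{\theta_t(t)}_{L^2(\partial\Omega)}$ is available nowhere in the paper (Theorem \ref{fthm3.2} controls only $\int_0^t e^{2\alpha s}\norm{\theta_t}^2\,ds$), and your proposed remedy --- differentiating \eqref{feqn3.5} in time and running another Gr\"onwall argument --- is a sketch of a result comparable in length and delicacy to Theorem \ref{fthm3.2} itself. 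In particular it requires an $O(h^2)$ bound on $\norm{\theta_t(0)}$, which does not follow from the compatibility condition \eqref{comp} alone but from the discrete equation at $t=0$ and the specific choice of $w_{0h}$, and it requires regularity of $w_{tt}$ beyond what Lemmas \ref{flm1}--\ref{flm5} provide. So the second half of your proposal, as written, does not close; fortunately, for the theorem as intended, it does not need to.
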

\begin{proof}
	First part of the proof follows from estimates of $\eta$ in \eqref{feq3.2} and Theorems \ref{fthm3.1} and \ref{fthm3.2} with a use of triangle inequality.
	
	For the second part, we note that
	\begin{align*}
	v_{2t}-v_{2ht}=-\frac{1}{\nu}\Big((c_0+2w_d)(\eta-\theta)+\frac{2}{9c_0}(\eta-\theta)(w^2+ww_h+w_h^2)\Big).
	\end{align*}
	Hence, using $w_h=\theta+\tilde w_h$ we get
	\begin{align*}
	&\norm{v_{2t}-v_{2ht}}_{L^\infty(L^2(\partial\Omega))}\\
	&\leq C\Big(\norm{\eta}_{L^\infty(L^2(\partial\Omega))}+\norm{\theta}_{L^\infty(L^2(\partial\Omega))}\Big)\Big(1+\norm{w}^2_{L^\infty(L^4(\partial\Omega))}+\norm{\tilde w_h}^2_{L^\infty(L^4(\partial\Omega))}+\norm{\theta}^2_{L^\infty(L^4(\partial\Omega))}\Big).
	\end{align*}
	A use of Lemmas \ref{flm2}, \ref{flm3.6} and Theorem \ref{fthm3.2} completes the proof.
\end{proof}
\section{Numerical experiments}
In this section, we conduct several numerical experiments to observe stabilizability of the system \eqref{feq1.7}-\eqref{feq1.9}. More precisely, the convergence of the unsteady solution to its constant steady state solution using nonlinear Neumann feedback control law are shown. 
Moreover, we show the order of convergence  for both state variable and feedback control law by  solving \eqref{feq1.10}.  Finally, our last example concerns with the stabilization of solution for the forced viscous Burgers' equation applying linear control law in which case steady state solution is nonconstant.

For a complete discrete scheme,  we use a semi-implicit Characteristic-Galerkin method as follows:
Let 
$0<k<1$ denote the time step size and  
$t_n=nk,$ where $n$ is nonnegative integer. For smooth function $\phi$ defined on $[0,\infty),$
set $\phi ^n=\phi(t_n).$  
We now apply the  Characteristic-Galerkin method to approximate 
 $$(w_t,\varphi_h)+\Big(w\big(\nabla w\cdot {\bf{1}}\big),\varphi_h\Big)\approx (\frac{W^{n}-W^{n-1}(X^{n-1}(x))}{k},\varphi_h),$$ where $X^{n-1}(x)$ is an approximation of the solution at $t=(n-1)k$ of the ordinary differential equation $\frac{d{\bf X}(t)}{dt}={\bf W}^{n-1}({\bf X}(t))$, ${\bf X}(nk)=x$, where ${\bf W}^{n-1}(x)=(w(x,(n-1)k), w(x,(n-1)k))$ and ${\bf X}=(X_1,X_2)$. Now this can be easily solve by Freefem++ 'convect' operator command. Hence, finally we seek  $\{{W^n}\}_{n\geq 1}\in V_h$ as a solution of
 \begin{align}
 \frac{(W^n,\varphi_h)}{k}&-\frac{1}{k}\Big((convect([W^{n-1}, W^{n-1}],-k,W^{n-1}),\varphi_h)\Big)+\nu(\nabla W^n,\nabla\varphi_{h})+w_d\big(\nabla W^n\cdot{\bf {1}},\varphi_h\big)\notag\\
 &+\Big\langle 2(c_0+w_d)W^n+(\frac{2}{9c_0}(W^{n-1})^2)W^n,\varphi_h\Big\rangle=0  \quad \forall~ \varphi_h \in V_h\label{feqn5.2}, 
 \end{align}
 with $W^0=w_{0h}$.
 For more details see \cite{Hecht}.
 For state and control trajectories final plot, we use Matlab.
\begin{example}\label{ex4.1}
We choose initial condition $w_0$ as $w_0=x_1(x_1-1)x_2(x_2-1)-3,$ where $w_d=3$ is the steady state solution and  $\nu=1$ with $\Omega=[0,1]\times [0,1]$. For uncontrolled solution, we take zero Neumann boundary condition in \eqref{feq1.10} and corresponding solution is denoted as "Uncontrolled solution" in Figure \ref{fig:6.1}. For controlled solution, we choose the Neumann control \eqref{feqx1} with $c_0=1$ and corresponding solution is denoted as "Controlled solution, $c_0=1$" in Figure \ref{fig:6.1}.
\end{example}
\begin{figure}[ht!]
	\begin{minipage}[b]{.5\linewidth}
		\centering
		
		\includegraphics[height=6cm]{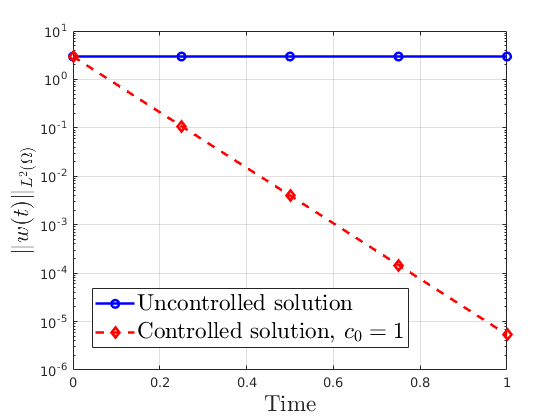}
		\caption{State, Example \ref{ex4.1}}
		\label{fig:6.1}
	\end{minipage}
	\hspace{0.05cm} 
	\begin{minipage}[b]{0.5\linewidth}
		\centering
		\includegraphics[height=6cm]{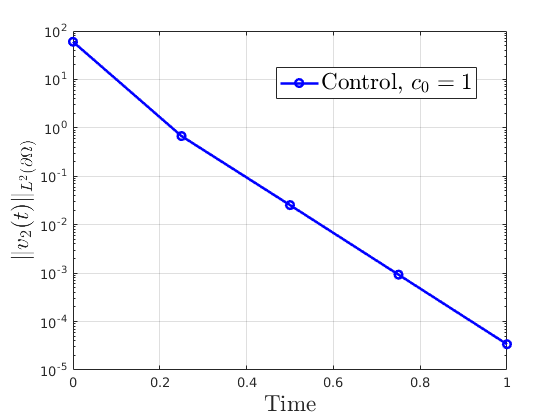}
		\caption{Control, Example \ref{ex4.1}}
		\label{fig:6.2}
	\end{minipage}
\end{figure}	
\begin{table}[ht]
	\centering
	\caption{Errors and convergence rate of $w$ when $c_0=1$, $k=0.0001$ and $t=1$ for Example \ref{ex4.1}}
	\begin{tabular}{c c c c c}
		\hline
		$h$ & $\norm{w(t_n)-W^n}$ & Conv. Rate & $\norm{w(t_n)-W^n}_1$ & Conv. Rate
		\\  \hline \hline
		$\frac{1}{4}$ & $1.26\times 10^{-7}$  &          & $5.82\times 10^{-7}$ & \\ \hline
		$\frac{1}{8}$ & $3.82\times 10^{-8}$ & 1.72 & $2.43\times 10^{-7}$ & 1.26 \\ \hline
		$\frac{1}{16}$ & $9.78\times 10^{-9}$ & 1.96 & $1.07\times 10^{-7}$ & 1.18 \\ \hline
		$\frac{1}{32}$ & $2.44\times 10^{-9}$ & 2.01 & $4.88\times 10^{-8}$ & 1.12 \\ \hline
		$\frac{1}{64}$ & $5.99\times 10^{-10}$ & 2.03 & $2.13\times 10^{-8}$ & 1.19\\ \hline
		\label{table:6.1}
	\end{tabular}
\end{table}
\begin{table}[ht]
	\centering
	\caption{Errors and convergence rate of $v_2$ when $c_0=1$ and $t=1$ for Example \ref{ex4.1}}
	\begin{tabular}{c c c c c}
		\hline
		$h$ & $|{v_2(t_n)-v_{2h}(t_n)}|$ & Conv. Rate 
		\\  \hline \hline
		$\frac{1}{4}$ & $8.75\times 10^{-7}$ &           \\ \hline
		$\frac{1}{8}$ & $2.61\times 10^{-7}$ & 1.74  \\ \hline
		$\frac{1}{16}$ & $6.67\times 10^{-8}$ & 1.97   \\ \hline
		$\frac{1}{32}$ & $1.65\times 10^{-8}$ & 2.01  \\ \hline
		$\frac{1}{64}$ & $4.03\times 10^{-9}$ & 2.04 \\ \hline 
		\label{table:6.2}
	\end{tabular}
\end{table}
 From Figure  \ref{fig:6.1}, we can easily see that without any control i.e. with zero Neumann boundary, solution of \eqref{feq1.10} does not settle at zero, whereas applying the control \eqref{feqx1}, the solution for the problem \eqref{feq1.10} in $L^2$- norm goes to zero. Also it is observed that for
 other values of $c_0>0$, the system \eqref{feq1.10} is stabilizable. From Table \ref{table:6.1}, it follows that $L^2$ and $H^1$ orders of convergence
 for state variable $w(t)$ are $2$ and $1$, respectively, which confirms our theoretical results established in Theorem \ref{fthm3.3}. 
 Since the exact solution is unknown in this case, we have taken very refined mesh solution as exact solution to compute the order of convergence.
 In Table \ref{table:6.2}, it is noted that the order of convergence of feedback control law \eqref{feqx1} is $2$, while  theoretically it is proved to be $3/2$ in Theorem \ref{fthm3.3}. 
\begin{example}\label{ex4.2}
In this example, take the initial condition $w_0=\sin(\pi x_1)\sin(\pi x_2)$ and  $\nu=0.05$, $c_0=1$ with $0$ as the steady state solution in $\Omega=[0,1]\times [0,1]$.
\end{example}
\begin{figure}[ht!]
	\begin{minipage}[b]{0.5\linewidth}
		\centering
		
		\includegraphics[height=6cm]{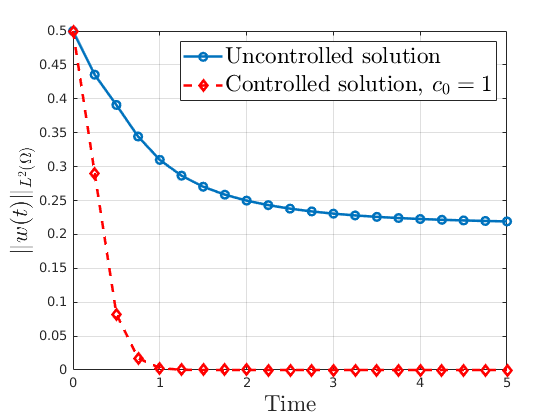}
		\caption{State, Example \ref{ex4.2}}
		\label{fig:6.3}
	\end{minipage}
	\hspace{0.05cm} 
	\begin{minipage}[b]{0.5\linewidth}
		\centering
		\includegraphics[height=6cm]{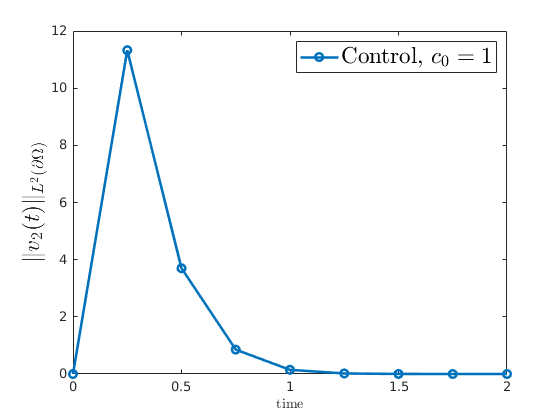}
		\caption{Control, Example \ref{ex4.2}}
		\label{fig:6.4}
	\end{minipage}
\end{figure}
From Figure \ref{fig:6.3}, it is observed that steady state solution $w_d=0$ is unstable in the first case denoted as "Uncontrolled solution ". But  using the control law \eqref{feqx1}, it is shown that state $w$ in $L^2$- norm goes to zero  exponentially. Figure \ref{fig:6.4} indicates how  control law \eqref{feqx1} behave with time and tends to settle at zero after some time.\\
Now the following example is related to the Remark \ref{rm} where the control law is applied to some part of the boundary. In the remaining part, either zero Dirichlet or zero Neumann condition is considered.
\begin{example}\label{ex3}
We take the initial condition $w_0=\cos(\pi x_1)\cos(\pi x_2)-5$, where $w_d=5$ is the steady state solution with  $\nu=0.01$ and $\Omega=[0,1]\times [0,1]$. We consider two cases.\\
{\bf Case 1:} Take zero Dirichlet boundary $\Gamma_D={1}\times[0,1]$ and on the remaining parts Neumann boundary control $\Gamma_N$. For controlled solution, we take \eqref{feqx1} with $c_0=10$ on $\Gamma_N$ i.e. on 3 parts of the boundary. We then compare the trajectories with another case which is given below.\\
{\bf Case 2:}  We consider zero Dirichlet boundary $\Gamma_D={1}\times[0,1]$, put the Neumann boundary control on ${0}\times[0,1]$ and  put zero Neumann boundary condition on other 2 parts. So, in Case 2 control works only on one part of the boundary.\\
For uncontrolled solution, we take zero Neumann boundary condition on $\Gamma_N$ and zero Dirichlet on $\Gamma_D={1}\times[0,1]$. 
\end{example}
\begin{figure}[ht!]
	\begin{minipage}[b]{0.5\linewidth}
		\centering
		
		\includegraphics[height=6cm]{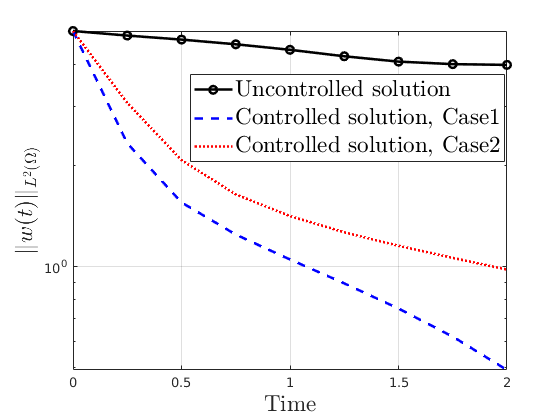}
		\caption{State, Example \ref{ex3}}
		\label{fig:x6.3}
	\end{minipage}
	\hspace{0.05cm} 
	\begin{minipage}[b]{0.5\linewidth}
		\centering
		\includegraphics[height=6cm]{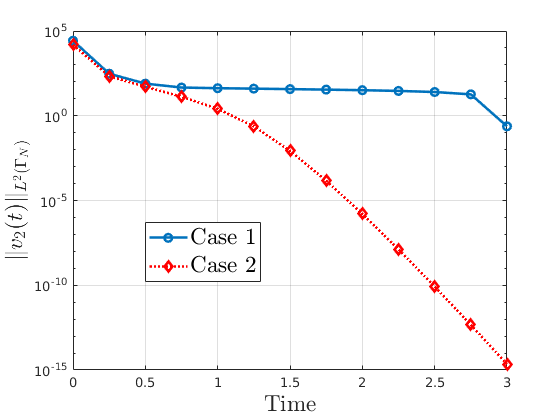}
		\caption{Control, Example \ref{ex3}}
		\label{fig:x6.4}
	\end{minipage}
\end{figure}
From Figure \ref{fig:x6.3}, it is clear that with homogeneous mixed boundary condition, uncontrolled solution does not change its state to zero whereas in presence of control on $\Gamma_N$ only, state and control trajectories go to zero both in Case 1 and Case 2.  Since in Case 2, control works only in one part of the boundary, so control needs more time compare to Case 1 to settle its state to zero, which is also visible in Figure \ref{fig:x6.3}. The $L^2$- norm of feedback control has a tendency to settle at zero faster in the Case 2 compare to Case 1 which is documented in Figure \ref{fig:x6.4}. Also we have not observed much differences in the trajectories if we change the zero Dirichlet by zero Neumann condition in Case 1. Concerning zero Dirichlet boundary on more parts of the boundary e.g. if we take $w=0$ on 3 parts of the boundary with Neumann control on remaining one part, corresponding state tends to settle at zero even within $t<1$ since with zero Dirichlet boundary, the system is already stable.\\

Below we discuss another example, where steady state solution is not a constant for the forced Burgers' equation. It will be shown that, even with the linear control law, system can be stabilizable numerically. 
\begin{example}\label{ex4}
We now consider a case when the steady state solution is not constant:
\begin{align}
&u_t-\nu\Delta u+u(\nabla u\cdot {\bf{1}})=f^\infty\qquad\text{in}\quad (x,t)\in \Omega\times(0,\infty)\label{feq6.1},\\
&\frac{\partial u}{\partial n}(x,t)=g^\infty+v_2(x,t)\qquad \text{on} \quad(x,t)\in \partial \Omega \times (0,\infty)\notag,\\
&u(x,0)=u_0(x)\qquad x\in \Omega\notag,
\end{align}
where, $f^\infty$ and $g^\infty$, independent of $t$  are functions of $x_1$ and $x_2$ only.
Corresponding equilibrium or steady state solution $u^\infty$ of the unsteady state problem satisfies
\begin{align}
-\nu \Delta u^\infty+u^\infty(\nabla u^\infty\cdot {\bf{1}})&=f^\infty \qquad\text{in} \quad \Omega \label{feq6.2},\\
\frac{\partial u^\infty}{\partial n}&=g^\infty \quad \text{on} \quad \partial \Omega \notag.
\end{align}
Let $w=u-u^\infty$. Then, $w$ satisfies
\begin{align}
&w_t-\nu \Delta w+u^\infty(\nabla w\cdot {\bf{1}})+w(\nabla u^\infty\cdot {\bf{1}})+w(\nabla w\cdot {\bf{1}})=0 \qquad\text{in}\quad (x,t)\in \Omega\times(0,\infty)\label{feq6.3},\\
&\frac{\partial w}{\partial n}(.,t)=v_2(x,t),\quad \text{on} \quad \partial \Omega\times(0,\infty)\notag,\\
&w(0)=u_0-u^\infty=w_0(\text{say})\quad\text{in}\quad\Omega \notag.
\end{align}
 Similarly as before we solve \eqref{feq6.3} using Freefem++ as in \eqref{feqn5.2} with linear control law $v_2=-\frac{1}{\nu}c_0W^n$.\\
For the numerical experiment, we choose viscosity parameter $\nu=0.1$, steady state solution $u^\infty=-0.2x_1$, forcing function $f^\infty=0.04x_1$ and $g^\infty= -0.2n_1$, control parameter $c_0=10$ with initial condition $w_0=\sin(\pi x_1)\sin(\pi x_2)+0.2x_1$ in $\Omega=[0,1]\times [0,1]$.
\end{example}
\begin{figure}[ht!]
	\begin{minipage}[b]{.5\linewidth}
		\centering
		
		\includegraphics[height=6cm]{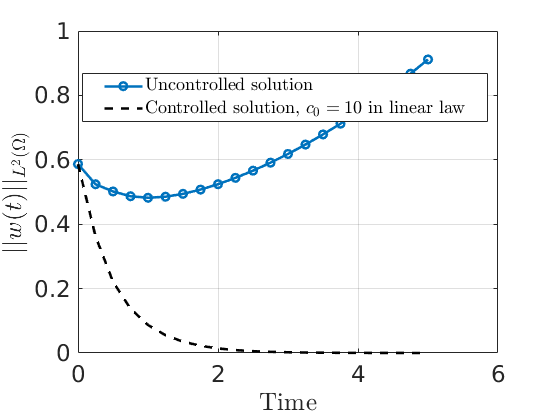}
		\caption{State, Example \ref{ex4}}
		\label{fig:6.5}
	\end{minipage}
	\hspace{0.05cm} 
	\begin{minipage}[b]{0.5\linewidth}
		\centering
		\includegraphics[height=6cm]{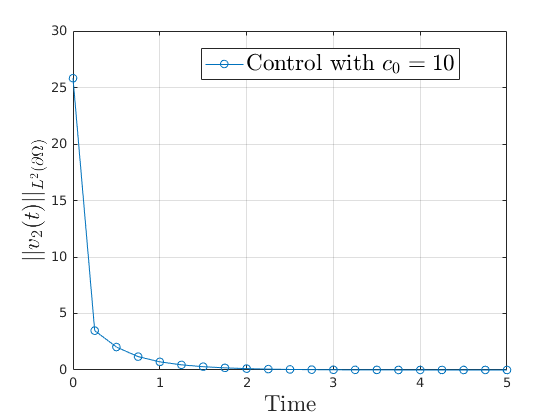}
		\caption{Control, Example \ref{ex4}}
		\label{fig:6.6}
	\end{minipage}
\end{figure}
\noindent
From the first draw line in Figure \ref{fig:6.5}, we observe that nonconstant steady state solution  is not asymptotically stable with zero Neumann boundary condition. But, using the linear control law
$-\frac{1}{\nu}c_0W^n$, system \eqref{feq6.3} is stabilizable which is also documented in Figure \ref{fig:6.5}. Figure \ref{fig:6.6} shows that the linear control law
$-\frac{1}{\nu}c_0W^n$ decays to zero as time increases.
However, we do not have a theoretical result to substantiate this observation. We believe that the system is locally stabilizable with this linear control law.

\section{Concluding Remarks.}
In this paper, global stabilization results for the two dimensional viscous Burgers'
equation are established in $L^\infty(H^i)$, $i=0,1,2$ norms, when the steady state solution is constant. 
Optimal error estimates in $L^\infty(L^2)$ and in $L^\infty(H^1)$ for the state variable are established. Further, error estimate for the feedback 
controller is also shown. All the results are verified by numerical examples.
Now under addition of forcing function in the two dimensional viscous Burgers' equation, the steady state solution is no more constant and as such the present analysis does not hold for nonconstant steady state case. Hence, the analysis for
two dimensional generalized forced viscous Burgers' equation  will be addressed in future.

{\bf{Acknowledgements}}
Both authors acknowledge the valuable suggestions and comments given by  honorable  referees  which help to improve the  manuscript. 
The first author was supported by the ERC advanced grant 668998 (OCLOC) under the EUs H2020 research program.

\bibliographystyle{amsplain}
 
\end{document}